\DeclareMathOperator{\Arg}{Arg}
\DeclareMathOperator{\st}{s.t.}
\DeclareMathOperator{\diag}{diag}
\DeclareMathOperator{\homg}{hom}
\DeclareMathOperator{\SDP}{sdp}
\DeclareMathOperator{\RLT}{RLT}
\DeclareMathOperator{\SOCRLT}{SOCRLT}
\newtheorem{theorem}{Theorem}
\newtheorem{lemma}{Lemma}
\newtheorem{proposition}{Proposition}
\newtheorem{assumption}{Assumption \!\!}
\newtheorem{remark}{Remark}
\newtheorem{example}{Example}
\newcommand{\U}{{\mathcal U}}
\newcommand{\V}{{\mathcal V}}
\newcommand{\K}{{\mathcal K}}
\newcommand{\CP}{\mathcal{CP}}
\newcommand{\COP}{\mathcal{COP}}
\newcommand{\SYM}{{\mathcal S}}
\newcommand{\nin}{\noindent}
\def\RR{ {\Bbb{R}}}
\title{Robust Sensitivity Analysis of the \\ Optimal Value of Linear Programming}
\author{
Guanglin Xu\thanks{Department of Management Sciences, University of Iowa,
Iowa City, IA, 52242-1994, USA. Email: {\tt guanglin-xu@uiowa.edu}.}%
\ \ \ \ \ \ \
Samuel Burer\thanks{Department of Management Sciences, University of Iowa,
Iowa City, IA, 52242-1994, USA. Email: {\tt samuel-burer@uiowa.edu}.}%
}
\date{September 14, 2015 \\ Revised: November 4, 2015}
\begin{document}

\maketitle

\begin{abstract}

\noindent We propose a framework for sensitivity analysis of linear
programs (LPs) in minimization form, allowing for simultaneous
perturbations in the objective coefficients and right-hand sides, where
the perturbations are modeled in a compact, convex uncertainty set. This
framework unifies and extends multiple approaches for LP sensitivity
analysis in the literature and has close ties to worst-case linear
optimization and two-stage adaptive optimization. We define the minimum
(best-case) and maximum (worst-case) LP optimal values, $p^-$ and
$p^+$, over the uncertainty set, and we discuss issues of finiteness,
attainability, and computational complexity. While $p^-$ and $p^+$ are
difficult to compute in general, we prove that they equal the optimal
values of two separate, but related, copositive programs. We then
develop tight, tractable conic relaxations to provide lower and upper
bounds on $p^-$ and $p^+$, respectively. We also develop techniques
to assess the quality of the bounds, and we validate our approach
computationally on several examples from---and inspired by---the
literature. We find that the bounds on $p^-$ and $p^+$ are very strong
in practice and, in particular, are at least as strong as known results
for specific cases from the literature.

\mbox{}

\noindent Keywords: Sensitivity analysis, minimax problem, nonconvex
quadratic programming, semidefinite programming, copositive programming,
uncertainty set.

\end{abstract}

%\begin{keyword}

%\end{keyword}

\begin{onehalfspace}

\section{Introduction} \label{sec:intro}

The standard-form linear program (LP) is

\begin{equation} 
\begin{array}{ll} \label{Eq:StdPrimal}
\min & \hat c^Tx  \\
\st & \hat Ax = \hat b  \\
& x \ge 0
\end{array}
\end{equation}

\nin where $x \in \RR^n$ is the variable and $(\hat A, \hat b,
\hat c) \in \RR^{m\times n} \times \RR^m \times \RR^n$ are the
problem parameters. In practice, $(\hat A, \hat b, \hat c)$ may not
be known exactly or may be predicted to change within a certain
region. In such cases, {\em sensitivity analysis} (SA) examines how
perturbations in the parameters affect the optimal value and solution
of (\ref{Eq:StdPrimal}). Ordinary SA considers the change of a single
element in $(\hat A, \hat b, \hat c)$ and examines the corresponding
effects on the optimal basis and tableau; see \cite{Dantzig}. SA also
extends to the addition of a new variable or constraint, although we do
not consider such changes in this paper.

Beyond ordinary SA, more sophisticated approaches that allow
simultaneous changes in the coefficients $\hat c$ or right-hand sides
$\hat b$ have been proposed by numerous researchers. Bradley et
al.~\cite{Bradley} discuss the {\em 100-percent rule} that requires
specification of directions of increase or decrease from each $\hat
c_j$ and then guarantees that the same basis remains optimal as long
as the sum of fractions, corresponding to the percent of maximum
change in each direction derived from ordinary SA, is less than or
equal to 1. Wendell \cite{Wendell1, Wendell2, Wendell3} develops the
{\em tolerance approach} to find the so-called {\em maximum tolerance
percentage} by which the objective coefficients can be simultaneously
and independently perturbed within {\em a priori} bounds. The tolerance
approach also handles perturbations in one row or column of the
matrix coefficients \cite{Ravi1} or even more general perturbations
in all elements of the matrix coefficients under certain assumptions
\cite{Ravi2}. Freund \cite{Freund.1985} investigates the sensitivity of
an LP to simultaneous changes in matrix coefficients. In particular,
he considers a linear program whose coefficient matrix depends
linearly on a scalar parameter $\theta$ and studies the effect of
small perturbations on the the optimal objective value and solution;
see also \cite{Kim.1971, Klatte.1979, Orchard-Hayes.1968}. Readers
are referred to \cite{Wendell4} for a survey of approaches for SA of
problem (\ref{Eq:StdPrimal}).

An area closely related to SA is {\em interval linear programming}
(ILP), which can be viewed as {\em multi-parametric linear programming}
with independent interval domains for the parameters \cite{Gal2, Gal1,
McKeown}. Steuer \cite{Steuer} presents three algorithms for solving
LPs in which the objective coefficients are specified by intervals, and
Gabrel et al.~\cite{Gabrel} study LPs in which the right-hand sides
vary within intervals and discuss the maximum and minimum optimal
values. Mraz \cite{Mraz} considers a general situation in which the
matrix coefficients and right-hand sides change within intervals and
calculates upper and lower bounds for the associated optimal values. A
comprehensive survey of ILP has been given by Hladik \cite{Hladik}.

To the best of our knowledge, in the context of LP, no authors have
considered simultaneous LP parameter changes in a general way, i.e.,
perturbations in the objective coefficients $\hat c$, right-hand sides
$\hat b$, and constraint coefficients $\hat A$ within a general region
(not just intervals). The obstacle for doing so is clear: general
perturbations lead to nonconvex quadratic programs (QPs), which are 
NP-hard to solve (as discussed below).

In this paper, we extend---and in many cases unify---the SA literature
by employing modern tools for nonconvex QPs. Specifically, we
investigate SA for LPs in which $(\hat b, \hat c)$ may change within a
general compact, convex set $\U$, called the {\em uncertainty set\/}.
Our goal is to calculate---or bound---the corresponding minimum
(best-case) and maximum (worst-case) optimal values. Since these values
involve the solution of nonconvex QPs, we use standard techniques from {\em
copositive optimization\/} to reformulate these problems into convex
{\em copositive programs} (COPs), which provide a theoretical grounding
upon which to develop tight, tractable convex relaxations. We suggest
the use of {\em semidefinite programming} (SDP) relaxations, which also
incorporate valid conic inequalities that exploit the structure of the
uncertainty set. We refer the reader to \cite{Dur.2010} for a survey on
copositive optimization and its connections to semidefinite programming.
Relevant definitions and concepts will also be given in this paper; see
Section \ref{ssec:notation}. 

Our approach is related to the recent work on {\em worst-case linear
optimization\/} introduced by Peng and Zhu \cite{Peng.Zhu.2015} in
which: (i) only $\hat b$ is allowed to change within an ellipsoidal
region; and (ii) only the worst-case LP value is considered. (In
fact, one can see easily that, in the setup of \cite{Peng.Zhu.2015}
based on (i), the best-case LP value can be computed in polynomial
time via second-order-cone programming, making it less interesting to
study in their setup.) The authors argue that the worst-case value is
NP-hard to compute and use a specialized nonlinear semidefinite program
(SDP) to bound it from above. They also develop feasible solutions to
bound the worst-case value from below and show through a series of
empirical examples that the resulting gaps are usually quite small.
Furthermore, they also demonstrate that their SDP-based relaxation
is better than the so-called {\em affine-rule approximation} (see
\cite{Ben-Tal.Goryashko.Guslitzer.Nemirovski.2004}) and the Lasserre
{\em linear matrix inequality} relaxation (see \cite{Lasserre.2001,
Henrion.Lasserre.Loefberg.2009}).

Our approach is more general than \cite{Peng.Zhu.2015} because we
allow both $\hat b$ and $\hat c$ to change, we consider more general
uncertainty sets, and we study both the worst- and best-case values. In
addition, instead of developing a specialized SDP approach, we make use
of the machinery of copositive programming, which provides a theoretical
grounding for the construction of tight, tractable conic relaxations
using existing techniques. Nevertheless, we have been inspired by their
approach in several ways. For example, their proof of NP-hardness also
shows that our problem is NP-hard; we will borrow their idea of using
primal solutions to estimate the quality of the relaxation bounds; and
we test some of the same examples.

We mention two additional connections of our approach with the
literature. In \cite{Bertsimas.Goyal.2012}, Bertsimas and Goyal consider
a two-stage adaptive linear optimization problem under right-hand
side uncertainty with a min-max objective. A simplified version of
this problem, in which the first-stage variables are non-existent,
reduces to worst-case linear optimization; see the introduction of
\cite{Bertsimas.Goyal.2012}. In fact, Bertsimas and Goyal use this
fact to prove that their problem is NP-hard via the so-called max-min
fractional set cover problem, which is a specific worst-case linear
optimization problem studied by Feige et al.~\cite{Feige.et.al.2007}.
Our work is also related to the study of {\em adjustable robust
optimization\/} \cite{Ben-Tal.Goryashko.Guslitzer.Nemirovski.2004,
Takeda.Taguchi.Tutuncu.2008}, which allows for two sets of
decisions---one that must be made before the uncertain data is realized,
and one after. In fact, our problem can viewed as a simplified case of
adjustable robust optimization having no first-stage decisions. On the
other hand, our paper is distinguished by its application to sensitivity
analysis and its use of copositive and semidefinite optimization.

We organize the paper as follows. In Section \ref{sec:rsa}, we extend
many of the existing approaches for SA by considering simultaneous,
general changes in $(\hat b, \hat c)$ and the corresponding effect on
the LP optimal value. Precisely, we model general perturbations of
$(\hat b, \hat c)$ within a compact, convex set $\U$---the uncertainty
set, borrowing terminology from the robust-optimization literature---and
define the corresponding minimum and maximum optimal values $p^-$ and
$p^+$, respectively. We call our approach {\em robust sensitivity
analysis\/}, or {\em RSA\/}. Then, continuing in Section \ref{sec:rsa},
we formulate the calculation of $p^-$ and $p^+$ as nonconvex bilinear
QPs (or BQPs) and briefly discuss attainability and complexity issues.
We also discuss how $p^-$ and $p^+$ may be infinite and suggest
alternative bounded variants, $q^-$ and $q^+$, which have the property
that, if $p^-$ is already finite, then $q^- = p^-$ and similarly for
$q^+$ and $p^+$. Compared to related approaches in the literature, our
discussion of finiteness is unique. We then discuss the addition of
redundant constraints to the formulations of $q^-$ and $q^+$, which will
strengthen later relaxations. Section \ref{sec:relax} then establishes
COP reformulations of the nonconvex BQPs by directly applying existing
reformulation techniques. Then, based on the COPs, we develop tractable
SDP-based relaxations that incorporate the structure of the uncertainty
set $\U$, and we also discuss procedures for generating feasible
solutions of the BQPs, which can also be used to verify the quality
of the relaxation bounds. In Section \ref{sec:expr}, we validate our
approach on several examples, which demonstrate that the relaxations
provide effective approximations of $q^+$ and $q^-$. In fact, we find
that the relaxations admit no gap with $q^+$ and $q^-$ for all 
tested examples.

We mention some caveats about the paper. First, we focus only on how the
optimal value is affected by uncertainty, not the optimal solution. We
do so because we believe this will be a more feasible first endeavor;
determining how general perturbations affect the optimal solution
can certainly be a task for future research. Second, as mentioned
above, we believe we are the first to consider these types of general
perturbations, and thus the literature with which to compare is somewhat
limited. However, we connect with the literature whenever possible,
e.g., in special cases such as interval perturbations and worst-case
linear optimization. Third, since we do not make any distributional
assumptions about the uncertainty of the parameters, nor about their
independence or dependence, we believe our approach aligns well with the
general sprit of {\em robust optimization}. It is important to note,
however, that our interest is {\em not} robust optimization and is {\em
not} directly comparable to robust optimization. For example, while in
robust optimization one wishes to find a single optimal solution that
works well for all realizations of the uncertain parameters, here we
are only concerned with how the optimal value changes as the parameters
change. Finally, we note the existence of other relaxations for
nonconvex QPs including LP relaxations (see \cite{Sherali.Adams.1997})
and Lasserre-type SDP relaxations. Generally speaking, LP-based
relaxations are relatively weak (see \cite{Anstreicher1}); we do not
consider them in this paper. In addition, SDP approaches can often be
tailored to outperform the more general Lasserre approach as has been
demonstrated in \cite{Peng.Zhu.2015}. Our copostive- and SDP-based
approach is similar; see for example the valid inequalities discussed in
Section \ref{ssec:sdprelax}.

\subsection{Notation, terminology, and copositive optimization} \label{ssec:notation}

Let $\RR^n$ denote $n$-dimensional Euclidean space represented as
column vectors, and let $\RR_+^n$ denote the nonnegative orthant in
$\RR^n$. For a scalar $p \ge 1$, the $p$-norm of $v \in \RR^n$ is
defined $\|v\|_p := (\sum_{i=1}^n |v_i|^p)^{1/p}$, e.g., $\|v\|_1 =
\sum_{i=1}^n |v_i|$. We will drop the subscript for the $2$-norm, e.g.,
$\|v\| := \|v\|_2$. For $v,w \in \RR^n$, the inner product of $v$ and
$w$ is defined as $v^Tw = \sum_{i=1}^n v_iw_i$ and the Hadamard product
of $v$ and $w$ is defined by $v \circ w := (v_1w_1,..., v_nw_n)^T
\in \RR^n$. $\RR^{m \times n}$ denotes the set of real $m \times n$
matrices, and the trace inner product of two matrices $A, B \in \RR^{m
\times n}$ is defined $A\bullet B := \text{trace}(A^TB)$. $\SYM^n$
denotes the space of $n \times n$ symmetric matrices, and for $X \in
\SYM^n$, $X \succeq 0$ denotes that $X$ is positive semidefinite. In
addition, $\diag(X)$ denotes the vector containing the diagonal entries
of $X$.

We also make several definitions related to {\em copositive
programming\/}. The $n \times n$ {\em copositive cone\/} is
defined as
\[
    \COP(\RR_+^n) := \{ M \in \SYM^n: x^T M x \ge 0 \ \forall \ x \in \RR_+^n \},
\]
and its dual cone, the {\em completely positive cone\/}, is
\[
    \CP(\RR_+^n) := \{ X \in \SYM^n: X = \textstyle{\sum_k} x^k (x^k)^T, \ x^k \in \RR_+^n \},
\]
where the summation over $k$ is finite but its cardinality is
unspecified. The term {\em copositive programming\/} refers to linear
optimization over $\COP(\RR_+^n)$ or, via duality, linear optimization
over $\CP(\RR_+^n)$. A more general notion of copositive
programming is based on the following ideas. Let $\K \subseteq \RR^n$ be
a closed, convex cone, and define
\begin{align*}
    \COP(\K) &:= \{ M \in \SYM^n: x^T M x \ge 0 \ \forall \ x \in \K \}, \\
    \CP(\K) &:= \{ X \in \SYM^n: X = \textstyle{\sum_k} x^k (x^k)^T, \ x^k \in \K \}.
\end{align*}
Then {\em generalized copositive programming\/} is linear optimization
over $\COP(\K)$ and $\CP(\K)$ and is also sometimes called {\em
set-semidefinite optimization\/} \cite{Eichfelder.Jahn.2008}. In this
paper, we work with generalized copositive programming, although we will
use the shorter phrase {\em copositive programming\/} for convenience.

\section{Robust Sensitivity Analysis} \label{sec:rsa}

In this section, we introduce the concept of robust sensitivity analysis
of the optimal value of the linear program (\ref{Eq:StdPrimal}).
In particular, we define the best-case optimal value $p^-$ and the
worst-case optimal value $p^+$ over the uncertainty set $\U$, which
contains general perturbations in the objective coefficients $\hat c$
and the right-hand sides $\hat b$. We then propose nonconvex bilinear
QPs (BQPs) to compute $p^-$ and $p^+$. Next, we clarify when $p^-$ and
$p^+$ could be infinite and propose finite, closely related alternatives
$q^+$ and $q^-$, which can also be formulated as nonconvex BQPs.
Importantly, we prove that $q^-$ equals $p^-$ whenever $p^-$ is finite;
the analogous relationship is also proved for $q^+$ and $p^+$.

\subsection{The best- and worst-case optimal values} \label{sec:optvalfunc}

In the Introduction, we have described $\hat b$ and $\hat c$ as
parameters that could vary, a concept that we now formalize. Hereafter,
$(\hat b, \hat c)$ denotes the {\em nominal\/}, ``best guess'' parameter
values, and we let $(b,c)$ denote perturbations with respect to $(\hat
b, \hat c)$. In other words, the true data could be $(\hat b + b, \hat c
+ c)$, and we think of $b$ and $c$ as varying. We also denote the {\em
uncertainty set\/} containing all possible perturbations $(b,c)$ as $\U
\subseteq \RR^m \times \RR^n$. Throughout this paper, we assume the
following:

\begin{assumption} \label{assmp:a1}
$\U$ is compact and convex, and $\U$ contains $(0,0)$. 
\end{assumption}

Given $(b,c) \in \U$, we define the perturbed optimal value function at
$(b,c)$ as
\begin{equation} \label{equ:p(b,c)}
p(b, c) := \min \{(\hat c + c)^Tx : \hat Ax = \hat b + b, \ x \ge 0 \}.
\end{equation}
For example, $p(0,0)$ is the {\em nominal optimal value\/} of the {\em
nominal problem\/} based on the nominal parameters. The main idea of
robust sensitivity analysis is then to compute the infimum (best-case)
and supremum (worst-case) of all optimal values $p(b,c)$ over the
uncertainty set $\U$, i.e., to calculate
\begin{align}
p^- &:= \inf\{ p(b,c) : (b,c) \in \U \}, \label{equ:p-} \\
p^+ &:= \sup\{ p(b,c) : (b,c) \in \U \}. \label{equ:p+}
\end{align}

\noindent We illustrate $p^-$ and $p^+$ with a small example.

\begin{example} \label{Ex:ex1}
Consider the nominal LP
\begin{equation} \label{Eq:simple_example}
\begin{array}{ll} 
\min  &  x_1 + x_2 \\
\st &  x_1 +  x_2  =  2 \\
     & x_1, x_2 \ge 0
\end{array}
\end{equation}
and the uncertainty set
\[
\U :=
\left\{ (b,c) : \begin{array}{c} b_1 \in [-1, 1] \\  c_1 \in [-0.5, 0.5], \ c_2 =0 \end{array}
 \right\}.
\]
Note that the perturbed data $\hat b_1 + b_1$ and $\hat c_1 + c_1$
remain positive, while $\hat c_2 + c_2$ is constant. Thus, the minimum
optimal value $p^-$ occurs when $b_1$ and $c_1$ are minimal,
i.e., when $b_1 = -1$ and $c_1 = -0.5$. In this case, $p^- = 0.5$ at
the solution $(x_1,x_2) =(1,0)$. In a related manner, $p^+ = 3$ when
$b_1 = 1$ and $c_1 = 0.5$ at the point $(x_1,x_2) =(0,3)$. Actually, any
perturbation with $c_1 \in [0,0.5]$ and $b_1=1$ realizes the worst-case
value $p^+=3$. Figure \ref{Fig:two_vars} illustrates this example.
\end{example}

\begin{figure}
        \begin{subfigure}[b]{0.5\textwidth}
                \includegraphics[width=\linewidth]{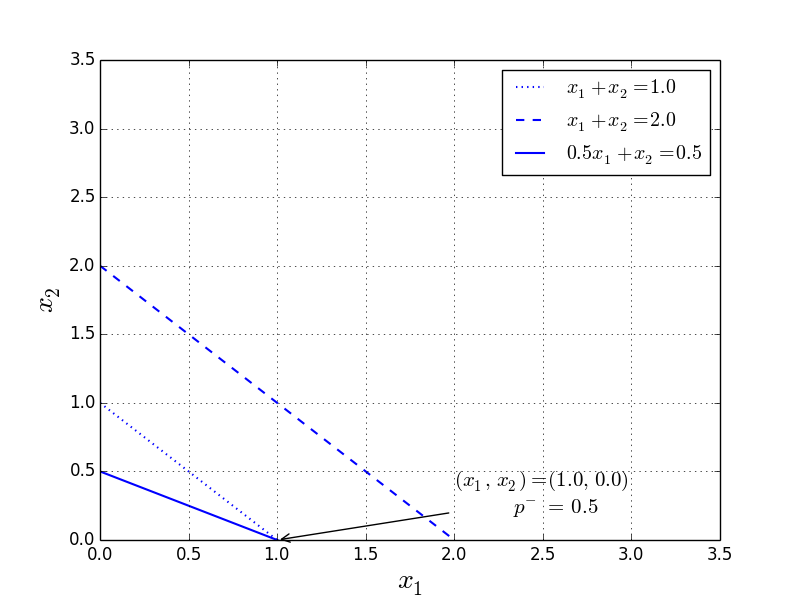}
                \caption{Illustration of the best-case optimal value}
                \label{Fig:p-}
        \end{subfigure}%
        \begin{subfigure}[b]{0.5\textwidth}
                \includegraphics[width=\linewidth]{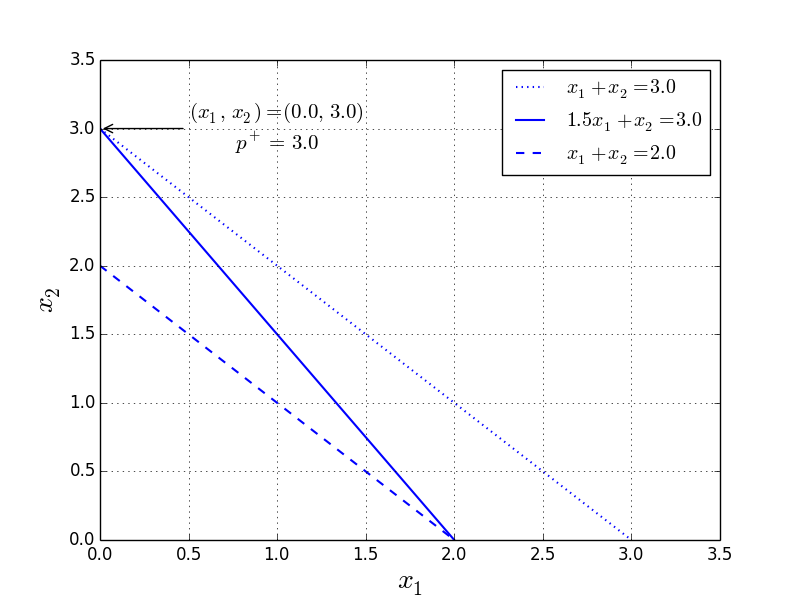}
                \caption{Illustration of the worst-case optimal value}
                \label{Fig:p+}
        \end{subfigure}%
        \caption{Illustration of Example \ref{Ex:ex1}. Note that
the dashed line in both (\ref{Fig:p-}) and (\ref{Fig:p+}) corresponds
to the feasible region of the nominal problem.}\label{Fig:two_vars}
\end{figure}

We can obtain a direct formulation of $p^-$ by simply collapsing the
inner and outer minimizations of (\ref{equ:p-}) into a single nonconvex
BQP:

\begin{equation} 
\begin{array}{lll} \label{equ:quadp-}
& \inf_{b,c,x} & (\hat c + c)^Tx  \\
& \st & \hat Ax = \hat b + b, \ x \ge 0 \\
& & (b,c) \in \U.
\end{array}
\end{equation}

\noindent The nonconvexity comes from the bilinear term $c^Tx$ in the
objective function. In the special case that $(b,c) \in \U$ implies $c =
0$, i.e., when there is no perturbation in the objective coefficients,
we have the following:

\begin{remark} \label{rem:polytime_c0}
If $\U$ is tractable and $c = 0$ for all $(b,c) \in \U$, then
$p^-$ can be computed in polynomial time as the optimal value of
(\ref{equ:quadp-}) with $c = 0$, which is a convex program.
\end{remark}

A direct formulation for $p^+$ can, under a fairly
weak assumption, be gotten via duality. Define the perturbed primal and
dual feasible sets for any $(b,c) \in \U$:
\begin{align*}
P(b) &:= \{ x : \hat A x = \hat b + b, x \ge 0 \}, \\
D(c) &:= \{ (y,s) \ge 0 : \hat A^T y + s = \hat c + c, s \ge 0  \}.
\end{align*}
For instance, $P(0)$ and $D(0)$ are the primal-dual feasible sets of the
nominal problem. Next define the dual LP for (\ref{equ:p(b,c)}) as
\begin{equation*} 
    d(b,c) := \max \{ (\hat b + b)^T y : (y,s) \in D(c) \}.
\end{equation*}
Considering the extended notion of strong duality, which handles the
cases of infinite values, we have that $d(b,c) = p(b,c)$ when at least
one of $P(b)$ and $D(c)$ is nonempty. Hence, under the assumption
that every $(b,c) \in \U$ yields $P(b) \ne \emptyset$ or $D(c) \ne
\emptyset$, a direct formulation for $p^+$ can be constructed by
replacing $p(b,c)$ in (\ref{equ:p-}) with $d(b,c)$ and then collapsing
the subsequent inner and outer maximizations into the single nonconvex
BQP
\begin{equation} 
\begin{array}{lll} \label{equ:quadp+}
& \sup_{b,c,y,s} & (\hat b + b)^Ty  \\
& \st & \hat A^Ty + s = \hat c + c, \ s \ge 0 \\
& & (b,c) \in \U.
\end{array}
\end{equation}
Here again, the nonconvexities arise due to the bilinear term $b^T y$
in the objective. If $(b,c) \in \U$ implies $b = 0$, then $p^+$ can be
calcuated in polynomial time:

\begin{remark} \label{rem:polytime_b0}
If $\U$ is tractable and $b = 0$ for all $(b,c) \in \U$, then
$p^+$ can be computed in polynomial time as the optimal value of
(\ref{equ:quadp+}) with $b = 0$, which is a convex program.
\end{remark}

We summarize the above discussion in the following proposition:

\begin{proposition} \label{pro:1stformulation}
The best-case value $p^-$ equals the optimal value of (\ref{equ:quadp-}).
Moreover, if $P(b) \ne \emptyset$ or $D(c) \ne \emptyset$ for all $(b,c)
\in \U$, then the worst-case value $p^+$ equals the optimal value of
(\ref{equ:quadp+}).
\end{proposition}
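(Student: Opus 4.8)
The proposition has two claims. The first is that $p^-$ equals the optimal value of (\ref{equ:quadp-}); the second is that, under the stated nonemptiness hypothesis, $p^+$ equals the optimal value of (\ref{equ:quadp+}). Both are essentially bookkeeping arguments that the nested optimizations in (\ref{equ:p-}) and (\ref{equ:p+}) collapse correctly into single problems, together with an appeal to LP duality for the second claim.

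For the first claim, the plan is to show the two optimization problems have the same value by unfolding the definition of $p^-$. Starting from (\ref{equ:p-}), I would write $p^- = \inf_{(b,c) \in \U} p(b,c)$ and then substitute the definition (\ref{equ:p(b,c)}), so that $p^-$ is an infimum over $(b,c) \in \U$ of an inner infimum over $x \in P(b)$. Since the combined objective $(\hat c + c)^T x$ and the joint feasible region $\{(b,c,x) : (b,c) \in \U,\ \hat A x = \hat b + b,\ x \ge 0\}$ are exactly those of (\ref{equ:quadp-}), the standard fact that a nested infimum equals the joint infimum over the product domain gives equality. Care is needed when $P(b) = \emptyset$ for some $(b,c)$: by the usual convention $p(b,c) = +\infty$ in that case, and such $(b,c)$ simply contribute no feasible $x$ to (\ref{equ:quadp-}), so they are harmless on both sides. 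This part requires no hypothesis beyond Assumption \ref{assmp:a1}.

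For the second claim, the key ingredient is LP strong duality in its extended form, already invoked in the text: $d(b,c) = p(b,c)$ whenever $P(b) \ne \emptyset$ or $D(c) \ne \emptyset$. The plan is first to use the blanket hypothesis that every $(b,c) \in \U$ satisfies $P(b) \ne \emptyset$ or $D(c) \ne \emptyset$, so that $p(b,c) = d(b,c)$ for all $(b,c) \in \U$; hence $p^+ = \sup_{(b,c) \in \U} p(b,c) = \sup_{(b,c) \in \U} d(b,c)$. Then I would substitute the definition of $d(b,c)$ as a maximization over $(y,s) \in D(c)$ and collapse the outer supremum over $(b,c)$ with the inner supremum over $(y,s)$, exactly as in the first claim, yielding the joint supremum (\ref{equ:quadp+}). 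Again the empty-$D(c)$ case is handled by the convention $d(b,c) = -\infty$, which removes no attainable value from the supremum on either side.

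The main obstacle, and the only genuinely delicate point, is the careful handling of infinite values in the extended-arithmetic conventions. One must verify that the infimum/supremum interchange is valid even when inner problems are infeasible or unbounded, so that no spurious $\pm\infty$ contributions appear and the two values genuinely coincide rather than merely bounding one another. Checking that the duality equality $d(b,c) = p(b,c)$ holds under precisely the stated ``$P(b) \ne \emptyset$ or $D(c) \ne \emptyset$'' condition—covering the mixed cases where one LP is feasible and the other infeasible—is where the argument must be stated with precision, but it follows from the standard strong-duality theorem for linear programming once the conventions are fixed.
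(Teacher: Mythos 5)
Your proposal is correct and follows essentially the same route as the paper, which justifies the proposition by the informal discussion preceding it: collapsing the nested infimum in (\ref{equ:p-}) into the joint infimum (\ref{equ:quadp-}), and for $p^+$ first replacing $p(b,c)$ by $d(b,c)$ via extended strong duality under the stated nonemptiness hypothesis and then collapsing the nested supremum into (\ref{equ:quadp+}). Your explicit attention to the $\pm\infty$ conventions is a slightly more careful write-up of the same argument, not a different one.
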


\noindent We view the condition in Proposition \ref{pro:1stformulation}---that
at least one of $P(b)$ and $D(c)$ is nonempty for each $(b,c) \in
\U$---to be rather mild. Said differently, the case that $P(b) = D(c) =
\emptyset$ for some $(b,c) \in \U$ appears somewhat pathological. For
practical purposes, we hence consider (\ref{equ:quadp+}) to be a valid
formulation of $p^+$. Actually, in the next subsection, we will further
restrict our attention to those $(b,c) \in \U$ for which both $P(b)$ and
$P(c)$ are nonempty. In such cases, each $p(b,c)$ is guaranteed to be
finite, which---as we will show---carefully handles the cases when $p^+$
and $p^-$ are infinite.

Indeed, the worst-case value $p^+$ could equal $+\infty$ due to some
perturbed $P(b)$ being empty as shown in the following example:

\begin{example} \label{Ex:ex2}
In Example \ref{Ex:ex1}, change the uncertainty set to
\[
\U :=
\left\{ (b,c) : \begin{array}{c} b_1 \in [-3, 1] \\  c_1 \in [-0.5, 0.5], c_2 =0 \end{array}
 \right\}.
\]
Then $p(b,c) = +\infty$ whenever $b_1 \in [-3, -2)$ since then the
primal feasible set $P(b)$ is empty. Then $p^+ = +\infty$ overall.
However, limiting $b_1$ to $[-2,1]$ yields a worst-case value of 3
as discussed in Example \ref{Ex:ex1}.
\end{example}

\noindent Similarly, $p^-$ might equal $-\infty$ due to some perturbed
LP having unbounded objective value, implying infeasibility of the
corresponding dual feasible set $D(c)$.

\subsection{Attainability and complexity} \label{ssec:complexity}

In this brief subsection, we mention results pertaining to the
attainability of $p^-$ and $p^+$ and the computational complexity of
computing them.

By an existing result concerning the attainability of the optimal value
of nonconvex BQPs, we have that $p^-$ and $p^+$ are attainable when $\U$
has a relatively simple structure:

\begin{proposition}[theorem 2 of \cite{Luo.Zhang.1999}]
Suppose $\U$ is representable by a finite number of linear constraints
and at most one convex quadratic constraint. Then, if the optimal value
of (\ref{equ:quadp-}) is finite, it is attained. A similar statement
holds for (\ref{equ:quadp+}).
\end{proposition}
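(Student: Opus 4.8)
The plan is to reduce the statement to the cited result of Luo and Zhang by showing that the optimization problems (\ref{equ:quadp-}) and (\ref{equ:quadp+}) fall into the class of problems to which their Theorem 2 applies, namely minimization (or maximization) of a bilinear form over a feasible region described by finitely many linear constraints and at most one convex quadratic constraint. Since the hypothesis already supplies that $\U$ has this representation, the essential work is to check that, after folding the LP constraints into the problem, the full feasible region retains this structure and that the objective is indeed bilinear (hence a special quadratic form) so that the Luo--Zhang attainability theorem is directly applicable.

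First I would treat problem (\ref{equ:quadp-}). Its variables are $(b,c,x)$, its objective $(\hat c + c)^T x = \hat c^T x + c^T x$ is linear plus bilinear, and its constraints are $\hat A x = \hat b + b$ and $x \ge 0$, both linear in $(b,c,x)$, together with the constraint $(b,c) \in \U$. By hypothesis $\U$ is cut out by finitely many linear inequalities and at most one convex quadratic inequality in $(b,c)$; viewed as constraints on $(b,c,x)$ they retain exactly this form since $x$ simply does not appear in them. Therefore the combined feasible region of (\ref{equ:quadp-}) is described by finitely many linear constraints plus at most one convex quadratic constraint, and the objective is a (possibly indefinite) quadratic form. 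This is precisely the setting of Theorem 2 of \cite{Luo.Zhang.1999}, so if the infimum is finite it is attained.

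Next I would handle (\ref{equ:quadp+}) analogously. Here the variables are $(b,c,y,s)$, the objective $(\hat b + b)^T y = \hat b^T y + b^T y$ is again linear plus bilinear, the constraints $\hat A^T y + s = \hat c + c$ and $s \ge 0$ are linear in all the variables, and $(b,c) \in \U$ contributes the finitely many linear and single convex quadratic constraints, now regarded as constraints on $(b,c,y,s)$. The same structural count applies, so Theorem 2 of \cite{Luo.Zhang.1999} again yields attainability of the supremum whenever it is finite. (Maximization is covered by applying the minimization version to the negated objective.)

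The main subtlety to watch for is not any deep difficulty but rather verifying that the hypotheses of the borrowed theorem are met without overcounting the quadratic constraints: the crucial point is that the LP equality and sign constraints are purely linear, so the single convex quadratic constraint allowed by \cite{Luo.Zhang.1999} is consumed entirely by $\U$, leaving the structure intact. Once this bookkeeping is confirmed, the proposition follows immediately by citation, with no further estimation required.
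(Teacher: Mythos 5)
Your proposal is correct and takes essentially the same route as the paper: the proposition is established there purely by invoking theorem 2 of \cite{Luo.Zhang.1999}, with no further argument given. Your explicit bookkeeping---that the LP equality and sign constraints are linear, that the objective $\hat c^Tx + c^Tx$ (resp.\ $\hat b^Ty + b^Ty$) is quadratic, and that the single convex quadratic constraint permitted by the cited theorem is consumed entirely by $\U$---is exactly the verification the paper leaves implicit.
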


\noindent In particular, attainability holds when $\U$ is polyhedral
or second-order-cone representable with at most one second-order cone.
Moreover, the bilinear nature of (\ref{equ:quadp-}) implies that, if the
optimal value is attained, then there exists an optimal solution $(x^*,
b^*, c^*)$ with $(b^*,c^*)$ an extreme point of $\U$. The same holds for
(\ref{equ:quadp+}) if its optimal value is attained.

As discussed in the Introduction, the worst-case value $p^+$ has been
studied by Peng and Zhu \cite{Peng.Zhu.2015} for the special case when
$c = 0$ and $b$ is contained in an ellipsoid. The authors demonstrate
(see their proposition 1.1) that calculating $p^+$ in this case is
NP-hard. By the symmetry of duality, it thus also holds that $p^-$ is
NP-hard to compute in general.

\subsection{Finite variants of $p^-$ and $p^+$} \label{ssec:finite}

We now discuss closely related variants of $p^+$ and $p^-$ that are
guaranteed to be finite and to equal $p^+$ and $p^-$, respectively, when
those values are themselves finite. We require the following feasibility
and boundedness assumption:

\begin{assumption} \label{assmp:a2}
Both feasible sets $P(0)$ and $D(0)$ are nonempty, and one is bounded.
\end{assumption}

By standard theory, $P(0)$ and $D(0)$ cannot both
be nonempty and bounded. Also define
\[
    \overline{\U} := \{ (b,c) \in \U : P(b) \ne \emptyset, D(c) \ne \emptyset \}.
\]
Note that $(0,0) \in \overline{\U}$ due to Assumption \ref{assmp:a2}.
In fact, $\overline{\U}$ can be captured with linear constraints that
enforce primal-dual feasibility and hence is a compact, convex subset of
$\U$:
\[
    \overline{\U} = \left\{ (b,c) \in \U :
        \begin{array}{c}
            %\exists \; (x,y,s) \text{ such that} \\
            \hat Ax = \hat b + b, x \ge 0 \\
            \hat A^T y + s = \hat c + c, s \ge 0 \\
        \end{array}
    \right\}.
\]
Analogous to $p^+$ and $p^-$, define
\begin{align}
    q^+ &:= \sup\{ p(b,c) : (b,c) \in \overline{\U} \} \label{equ:barp+} \\
    q^- &:= \inf\{ p(b,c) : (b,c) \in \overline{\U} \}. \label{equ:barp-}
\end{align}

\noindent The following proposition establishes the finiteness of $q^+$ and $q^-$:

\begin{proposition} \label{prop:finiteness}
Under Assumptions \ref{assmp:a1} and \ref{assmp:a2}, both $q^+$ and
$q^-$ are finite.
\end{proposition}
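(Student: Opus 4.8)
The plan is to show that $q^+$ and $q^-$ are both finite by exploiting the definition of $\overline{\U}$, over which every $p(b,c)$ is guaranteed to be a finite real number, and then invoking compactness together with a continuity/boundedness argument. The key observation is that for $(b,c) \in \overline{\U}$, both $P(b) \neq \emptyset$ and $D(c) \neq \emptyset$, so by weak LP duality $p(b,c) = d(b,c)$ is finite: the primal is feasible and the dual is feasible, which rules out both $+\infty$ (primal infeasibility) and $-\infty$ (primal unboundedness, i.e. dual infeasibility). Thus $q^+$ and $q^-$ are suprema and infima of a set of finite reals, and the only remaining danger is that these could still be $+\infty$ or $-\infty$ in the limit as $(b,c)$ ranges over $\overline{\U}$.

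**First I would** establish finiteness of $q^+$ (the worst-case value) by constructing an explicit finite upper bound. Since $P(0) \neq \emptyset$ and $D(0) \neq \emptyset$ by Assumption~\ref{assmp:a2}, and one of them is bounded, suppose first that $P(0)$ is bounded (the dual-bounded case is symmetric). For any $(b,c) \in \overline{\U}$ we have $p(b,c) = d(b,c) = \max\{(\hat b + b)^T y : (y,s) \in D(c)\}$. The plan is to bound this using a fixed dual-feasible point together with compactness of $\U$: since $\U$ is compact, the quantities $\hat b + b$ and $\hat c + c$ range over compact sets, so I would show that the relevant optimal primal solutions $x$ achieving $p(b,c)$ can be taken from a bounded set. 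Concretely, because $P(0)$ is bounded, the recession cone $\{x \ge 0 : \hat A x = 0\}$ is trivial, and hence each perturbed feasible set $P(b)$ is also bounded with a bound depending continuously (indeed, via a Hoffman-type bound) on $b$; combined with the compactness of $\U$, this yields a uniform bound on $\|x\|$ over all optimizers, and then $p(b,c) = (\hat c + c)^T x \le \sup_{\U}\|\hat c + c\| \cdot \sup\|x\|$ is finite, giving $q^+ < +\infty$. Boundedness below is immediate since $p(b,c)$ is finite for each point and $q^+$ is a sup.

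**The analogous argument** for $q^-$ (the best-case value) proceeds by the same reasoning applied to the dual, or equivalently by noting that in the dual-bounded subcase $D(0)$ is bounded, its recession cone is trivial, each $D(c)$ is uniformly bounded over the compact set $\U$, and hence $d(b,c)$ is bounded below uniformly, giving $q^- > -\infty$. The finiteness of $q^-$ from above and $q^+$ from below follows trivially from the nonemptiness of $\overline{\U}$ (which contains $(0,0)$) and the fact that any single value $p(0,0)$ is a finite real lying between $q^-$ and $q^+$.

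**The main obstacle** I anticipate is the uniform-boundedness step: showing that as $(b,c)$ varies over the compact set $\overline{\U}$, the optimal solutions (primal $x$ in the $P(0)$-bounded case, or dual $(y,s)$ in the $D(0)$-bounded case) remain in a single bounded set, rather than escaping to infinity along a sequence. The clean way to handle this is to argue via recession cones: Assumption~\ref{assmp:a2} forces exactly one of the nominal feasible sets to be bounded, which makes the corresponding recession cone trivial; a perturbation in $b$ (resp.\ $c$) shifts the feasible polyhedron but does not change its recession cone, so boundedness is preserved under perturbation, and the compactness of $\U$ then upgrades this to a \emph{uniform} bound. Once that uniform bound is in place, the finite bound on $p(b,c)$ follows from Cauchy–Schwarz, and the result is immediate.
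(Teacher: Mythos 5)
Your proof is correct in substance but takes a genuinely different route from the paper's. The paper argues by contrapositive with a single normalization argument: if, say, $q^- = -\infty$, it takes a sequence $(b^k,c^k) \in \overline{\U}$ with $p(b^k,c^k) \to -\infty$, uses strong duality to obtain primal \emph{and} dual solution sequences with the same diverging objective value, concludes $\|x^k\|, \|y^k\| \to \infty$ from boundedness of $\overline{\U}$, and then normalizes each sequence to extract nonzero recession directions for \emph{both} $P(0)$ and $D(0)$, contradicting Assumption~\ref{assmp:a2}; no case split on which nominal set is bounded is needed. You instead give a direct proof: whichever of $P(0)$, $D(0)$ is bounded has trivial recession cone, so the corresponding perturbed feasible sets $P(b)$ (resp.\ $D(c)$) remain bounded, uniformly over the compact set $\overline{\U}$, and Cauchy--Schwarz then bounds $|p(b,c)|$ uniformly, which yields finiteness of both $q^+$ and $q^-$ at once. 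Your approach buys an explicit quantitative bound on $|p(b,c)|$, but its load-bearing step --- upgrading pointwise boundedness of the $P(b)$ to a \emph{uniform} bound over $\overline{\U}$ --- is exactly where the work lies: pointwise boundedness does not automatically imply uniform boundedness, and the standard ways to close this (a basis/vertex bound $\max_B \|\hat A_B^{-1}\|\,\|\hat b + b\|$, a Hoffman bound, or a sequence-normalization contradiction) amount to either invoking nontrivial polyhedral machinery or reproducing the paper's subsequence argument inside your lemma. You correctly identify this as the main obstacle and point to valid tools, so there is no gap of substance, only a step left to be fully executed. One presentational caution: your second and third paragraphs read as if you treat the $P(0)$-bounded case when proving $q^+ < +\infty$ and the $D(0)$-bounded case when proving $q^- > -\infty$, which would be incoherent since exactly one of the two nominal sets is bounded; make explicit that in either case the uniform bound controls $|p(b,c)|$ on both sides, so a single case analysis dispatches both claims.
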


\begin{proof}
We prove the contrapositive for $q^-$. (The argument for $q^+$ is
similar.) Suppose $q^- = -\infty$. Then there exists a sequence $\{
(b^k,c^k) \} \subseteq \overline{\U}$ with finite optimal values
$p(b^k,c^k) \to -\infty$. By strong duality, there exists a primal-dual
solution sequence $\{ (x^k,y^k,s^k) \}$ with $(\hat c + c)^T x^k = (\hat
b + b)^T y^k \to -\infty$. Since $\overline{\U}$ is bounded, it follows
that $\|x^k\| \to \infty$ and $\| y^k \| \to \infty$.

Consider the sequence $\{ (z^k,d^k) \}$ with $(z^k,d^k) := (x^k,b^k)/
\|x^k\|.$ We have $\hat A z^k = \hat b/\|x^k\| + d^k$, $z^k \ge 0$,
and $\|z^k\| = 1$ for all $k$. Morover, $\hat b/\|x^k\| + d^k \to 0$.
Hence, there exists a subsequence converging to $(\bar z, 0)$ such
that $\hat A \bar z = 0$, $\bar z \ge 0$, and $\|\bar z\| = 1$. This
proves that the recession cone of $P(0)$ is nontrivial, and hence $P(0)$
is unbounded. In a similar manner, $D(0)$ is unbounded, which means
Assumption \ref{assmp:a2} does not hold.
\end{proof}

\noindent Note that the proof of Proposition \ref{prop:finiteness} only
assumes that $\U$, and hence $\overline{\U}$, is bounded, which does not
use the full power of Assumption \ref{assmp:a1}.

Similar to $p^-$, a direct formulation of $q^-$ can be constructed
by employing the primal-dual formulation of $\overline{\U}$ and by
collapsing the inner and outer minimizations of (\ref{equ:barp-}) into a
single nonconvex BQP:
\begin{align} \label{equ:quadbarp-}
\begin{array}{lll} 
    q^- \ = \ & \inf_{b,c,x,y,s} & (\hat c + c)^Tx  \\
 & \st & \hat Ax = \hat b + b, \ x \ge 0 \\
& & \hat A^Ty + s = \hat c + c, \ s \ge 0 \\
& & (b,c) \in \U.
\end{array}
\end{align}
\noindent Likewise for $p^+$, after replacing $p(b,c)$ in
(\ref{equ:barp+}) by $d(b,c)$, we can collapse the inner and outer
maximizations into a single nonconvex BQP:
\begin{align} \label{equ:quadbarp+}
\begin{array}{lll} 
    q^+ \ = \ & \sup_{b,c,x,y,s} & (\hat b + b)^Ty  \\
& \st & \hat Ax = \hat b + b, \ x \ge 0 \\
&& \hat A^Ty + s = \hat c + c, \ s \ge 0 \\
&& (b,c) \in \U.
\end{array}
\end{align}

The following proposition establishes $q^+ = p^+$ when $p^+$ is
finite and, similarly, $q^- = p^-$ when $p^-$ is finite.

\begin{proposition}
If $p^+$ is finite, then $q^+ = p^+$, and if $p^-$ is finite, then
$q^- = p^-$.
\end{proposition}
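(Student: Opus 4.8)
The plan is to exploit the fact, noted just before the statement, that $\overline{\U}$ is \emph{exactly} the subset of $\U$ on which $p(b,c)$ is finite: by LP duality, $P(b)\neq\emptyset$ together with $D(c)\neq\emptyset$ is equivalent to $p(b,c)$ being finite. The points of $\U\setminus\overline{\U}$ therefore carry only infinite values, and the whole argument reduces to checking that, under the relevant finiteness hypothesis, those infinite values have the ``harmless'' sign and so do not affect the supremum (resp.\ infimum). Since $\overline{\U}\subseteq\U$, the inequalities $q^+\le p^+$ and $q^-\ge p^-$ are immediate; the content is the reverse inequality in each case.

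I would first treat $q^+=p^+$. Assuming $p^+$ finite, I would argue that $P(b)\neq\emptyset$ for \emph{every} $(b,c)\in\U$: otherwise some $P(b)=\emptyset$ would give $p(b,c)=+\infty$ and hence $p^+=+\infty$, a contradiction. Consequently, for any $(b,c)\in\U\setminus\overline{\U}$ we must have $P(b)\neq\emptyset$ but $D(c)=\emptyset$, so the perturbed LP is unbounded below and $p(b,c)=-\infty$. Such points cannot raise a supremum, so $p^+=\sup\{p(b,c):(b,c)\in\overline{\U}\}=q^+$, where I use Proposition~\ref{prop:finiteness} to note that $q^+$ is finite and in particular exceeds $-\infty$.

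The case $q^-=p^-$ is dual. Assuming $p^-$ finite, no $p(b,c)$ can equal $-\infty$ (else the infimum would be $-\infty$), so by the same duality dichotomy no $(b,c)\in\U$ has $P(b)\neq\emptyset$ together with $D(c)=\emptyset$. Hence every $(b,c)\in\U\setminus\overline{\U}$ must have $P(b)=\emptyset$, giving $p(b,c)=+\infty$; such points cannot lower an infimum, and therefore $p^-=\inf\{p(b,c):(b,c)\in\overline{\U}\}=q^-$.

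The step requiring the most care is the case analysis linking primal/dual feasibility to the \emph{sign} of the infinite value of $p(b,c)$ --- concretely, invoking the extended strong-duality dichotomy ($P(b)=\emptyset\Rightarrow p(b,c)=+\infty$, and $P(b)\neq\emptyset$ with $D(c)=\emptyset\Rightarrow p(b,c)=-\infty$) and being careful with the $\pm\infty$ conventions for the sup and inf taken over $\U\setminus\overline{\U}$. Everything else is routine bookkeeping.
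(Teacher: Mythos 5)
Your proof is correct. It runs on the same engine as the paper's---the LP strong-duality dichotomy---but takes a genuinely different route: you work directly with the value-function definitions of $p^{\pm}$ and $q^{\pm}$, partition $\U$ into $\overline{\U}$ and its complement, and verify that under the finiteness hypothesis every excluded point carries an infinity of the harmless sign ($-\infty$ for the supremum, $+\infty$ for the infimum). The paper instead argues at the level of the bilinear QP formulations: $p^-\le q^-$ is read off from the fact that the $q^-$ program simply has more constraints, and for the reverse inequality any feasible $(b,c,x)$ of the $p^-$ program is lifted to a feasible $(b,c,x,y,s)$ of the $q^-$ program with the same objective value, the dual pair $(y,s)$ existing because finiteness of $p^-$ forces $p(b,c)$ to be finite and hence $D(c)\ne\emptyset$. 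Your version has the small advantage of bypassing the BQP reformulations entirely---in particular, for the $p^+$ case it does not need the hypothesis of Proposition~\ref{pro:1stformulation} identifying $p^+$ with the value of the dual BQP, which the paper's route implicitly uses and which is only available because finiteness of $p^+$ happens to imply that hypothesis. The paper's lifting argument, on the other hand, also yields information about feasible solutions rather than just values. One point to make explicit in your write-up: $\overline{\U}\ne\emptyset$ since it contains $(0,0)$ by Assumption~\ref{assmp:a2}, so the supremum and infimum over $\overline{\U}$ are not vacuous; with that noted, your case analysis is complete.
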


\begin{proof}
We prove the second statement only since the first is similar. Comparing
the formulation (\ref{equ:quadp-}) for $p^-$ and the formulation
(\ref{equ:quadbarp-}) for $q^-$, it is clear that $p^- \le
q^-$. In addition, let $(b,c,x)$ be any feasible solution of
(\ref{equ:quadp-}). Because $p^-$ is finite, $p(b,c)$ is finite. Then
the corresponding dual problem is feasible, which implies that we can
extend $(b,c,x)$ to a solution $(b,c,x,y,s)$ of (\ref{equ:quadbarp-})
with the same objective value. Hence, $p^- \ge q^-$.
\end{proof}

In the remaining sections of the paper, we will focus on the
finite variants $q^-$ and $q^+$ given by the nonconvex QPs
(\ref{equ:quadbarp-}) and (\ref{equ:quadbarp+}), which optimize the
optimal value function $p(b,c) = d(b,c)$ based on enforcing primal-dual
feasibility. It is clear that we may also enforce the complementary
slackness condition $x \circ s = 0$ without changing these problems.
Although it might seem counterintuitive to add the redundant, nonconvex
constraint $x \circ s = 0$ to an already difficult problem, in Section
\ref{sec:relax}, we will propose convex relaxations to approximate $q^-$
and $q^+$, in which case---as we will demonstrate---the relaxed versions
of the redundant constraint can strengthen the relaxations.

\section{Copositve Formulations and Relaxations} \label{sec:relax}

In this section, we use copositive optimization techniques
to reformulate the RSA problems (\ref{equ:quadbarp-}) and
(\ref{equ:quadbarp+}) into convex programs. We further relax the
copositive programs into conic, SDP-based problems, which are
computationally tractable.

\subsection{Copositive formulations} \label{sec:coprelax}

In order to formulate (\ref{equ:quadbarp-}) and (\ref{equ:quadbarp+})
as COPs, we apply a result of \cite{Burer.2011}; see also
\cite{Burer.2009,Dickinson.et.al.2013,Eichfelder.Povh.2013}. Consider
the general nonconvex QP
\begin{align}
    \inf \ \ \ \ &z^T W z + 2 \, w^T z \label{equ:genQP} \\
    \st \ \ \ &Ez = f, \ z \in \K \nonumber
\end{align}
where $\K$ is a closed, convex cone. Its copositive reformulation is
\begin{align}
    \inf \ \ \ \ &W \bullet Z + 2 \, w^T z \label{equ:genCOP} \\
    \st \ \ \ &Ez = f, \ \diag(EZE^T) = f \circ f \nonumber \\
    &\begin{pmatrix} 1 & z^T \\ z & Z \end{pmatrix} \in \CP(\RR_+ \times \K), \nonumber
\end{align}
as established by the following lemma:

\begin{lemma}[corollary 8.3 in \cite{Burer.2011}] \label{lemma:Burer}
Problem (\ref{equ:genQP}) is equivalent to (\ref{equ:genCOP}), i.e.:
(i) both share the same optimal value; (ii) if $(z^*,Z^*)$ is optimal
for (\ref{equ:genCOP}), then $z^*$ is in the convex hull of optimal
solutions for (\ref{equ:genQP}).
\end{lemma}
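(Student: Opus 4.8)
The plan is to prove the two inequalities between the optimal values of (\ref{equ:genQP}) and (\ref{equ:genCOP}) separately, and then to extract the convex-hull statement from the case of equality.

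First I would establish that (\ref{equ:genCOP}) is a relaxation of (\ref{equ:genQP}), i.e.\ that its optimal value is no larger. Given any feasible $z$ for (\ref{equ:genQP}), I would lift it to the rank-one pair $(z,Z) := (z, zz^T)$. One checks directly that $\begin{pmatrix} 1 & z^T \\ z & zz^T \end{pmatrix} = \begin{pmatrix} 1 \\ z \end{pmatrix}\begin{pmatrix} 1 \\ z \end{pmatrix}^T \in \CP(\RR_+ \times \K)$ because $(1,z) \in \RR_+ \times \K$, that $\diag(E zz^T E^T) = (Ez)\circ(Ez) = f \circ f$, and that the objective $W \bullet zz^T + 2w^T z$ coincides with $z^T W z + 2 w^T z$. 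This yields one inequality and matches objective values along rank-one points.

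The substantive direction is to show that every feasible $(z,Z)$ of (\ref{equ:genCOP}) disaggregates into feasible (and recession) points of (\ref{equ:genQP}). I would take a completely positive decomposition $\begin{pmatrix} 1 & z^T \\ z & Z \end{pmatrix} = \sum_k \begin{pmatrix} \lambda_k \\ \zeta_k \end{pmatrix}\begin{pmatrix} \lambda_k \\ \zeta_k \end{pmatrix}^T$ with each $(\lambda_k, \zeta_k) \in \RR_+ \times \K$, so that $\sum_k \lambda_k^2 = 1$, $z = \sum_k \lambda_k \zeta_k$, and $Z = \sum_k \zeta_k \zeta_k^T$. The key step is to use the constraint $\diag(EZE^T) = f \circ f$ together with $Ez = f$ componentwise. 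Fixing a row $i$ and writing $a_k := (E\zeta_k)_i$, these read $\sum_k \lambda_k a_k = f_i$ and $\sum_k a_k^2 = f_i^2$; combined with $\sum_k \lambda_k^2 = 1$, the Cauchy--Schwarz inequality $(\sum_k \lambda_k a_k)^2 \le (\sum_k \lambda_k^2)(\sum_k a_k^2)$ holds with \emph{equality}. Equality forces proportionality, giving $a_k = f_i \lambda_k$ for all $k$; ranging over $i$ yields $E \zeta_k = \lambda_k f$ for every $k$. Hence each $k$ with $\lambda_k > 0$ produces a feasible point $z_k := \zeta_k/\lambda_k$ of (\ref{equ:genQP}), while each $k$ with $\lambda_k = 0$ produces a recession direction $\zeta_k \in \K \cap \ker E$.

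Finally I would account for the objective. Substituting the decomposition gives $W \bullet Z + 2 w^T z = \sum_{k:\lambda_k>0} \lambda_k^2 \,(z_k^T W z_k + 2 w^T z_k) + \sum_{k:\lambda_k=0} \zeta_k^T W \zeta_k$, a convex combination (weights $\lambda_k^2$ summing to $1$) of genQP objective values plus recession contributions. If (\ref{equ:genQP}) is bounded below, boundedness forces $\zeta^T W \zeta \ge 0$ on $\K \cap \ker E$, so each recession term is nonnegative and the COP objective is at least the QP optimal value, matching the relaxation bound and giving (i); if (\ref{equ:genQP}) is unbounded below, scaling a bad recession direction shows (\ref{equ:genCOP}) is unbounded as well. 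For (ii), at an optimal $(z^*,Z^*)$ the common optimal value equals the displayed convex combination, forcing $z_k^T W z_k + 2 w^T z_k$ to equal the optimum for every $k$ with $\lambda_k > 0$ and the recession terms to vanish; hence $z^* = \sum_{k:\lambda_k>0} \lambda_k^2 z_k$ lies in the convex hull of optimal solutions of (\ref{equ:genQP}). I expect the main obstacle to be the tight Cauchy--Schwarz step: recognizing that the innocuous-looking quadratic equality $\diag(EZE^T) = f \circ f$, paired with the linear equality $Ez = f$, is exactly what pins the decomposition to the affine constraint and makes the relaxation exact.
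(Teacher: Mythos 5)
The paper does not actually prove this lemma; it is imported verbatim as Corollary 8.3 of the cited Burer handbook chapter, so there is no internal proof to compare against. Your argument is correct and is essentially the standard proof from that source and its relatives (Burer 2009, Eichfelder--Povh): rank-one lifting for the relaxation direction, a finite completely positive decomposition with the tight Cauchy--Schwarz step on $\diag(EZE^T)=f\circ f$ to force $E\zeta_k=\lambda_k f$, disaggregation into feasible points and recession directions, and the equality-case analysis for the convex-hull claim. Two fine points worth noting: the finite decomposition you invoke is available precisely because the paper defines $\CP(\RR_+\times\K)$ \emph{without} taking a closure (with the closed version the argument needs more care, which is the subject of the Dickinson--Eichfelder--Povh erratum); and when $f_i=0$ the Cauchy--Schwarz equality is $0\le 0$ and does not by itself force proportionality, but then $\sum_k a_k^2=0$ gives $a_k=0=f_i\lambda_k$ directly, so your conclusion stands.
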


The following theorem establishes that problems (\ref{equ:quadbarp-})
and (\ref{equ:quadbarp+}) can be reformulated as copositive programs
according to Lemma \ref{lemma:Burer}. The proof is based on describing
how the two problems fit the form (\ref{equ:genQP}).

\begin{theorem} \label{the:cop}
Problems (\ref{equ:quadbarp-}) and (\ref{equ:quadbarp+}) to
compute $q^-$ and $q^+$ are solvable as copositive programs of the form
(\ref{equ:genCOP}), where
\[
    \K := \homg(\U) \times \RR_+^n \times \RR^m \times \RR^n_+
\]
and
\[
    \homg(\U) := \{ (t,b,c) \in \RR_+ \times \RR^m \times \RR^n :
    t > 0, \ (b,c)/t \in \U \} \cup \{ (0,0,0) \}
\]
is the homogenization of $\U$.
\end{theorem}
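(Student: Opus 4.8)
The plan is to show that both nonconvex BQPs (\ref{equ:quadbarp-}) and (\ref{equ:quadbarp+}) can be massaged into the canonical form (\ref{equ:genQP}) with the prescribed cone $\K$, after which Lemma \ref{lemma:Burer} immediately yields the copositive reformulation (\ref{equ:genCOP}). The main conceptual step is to understand why the homogenization variable $t$ and the cone $\homg(\U)$ appear: the constraint $(b,c)\in\U$ is not a conic constraint, so it cannot be plugged directly into (\ref{equ:genQP}), which only admits linear equalities and membership in a single closed convex cone. I would resolve this by introducing an auxiliary scalar $t$, adding the linear equality $t=1$, and replacing $(b,c)\in\U$ with the conic membership $(t,b,c)\in\homg(\U)$; when $t=1$ this is exactly $(b,c)\in\U$, so the reformulated problem has the same feasible set and objective.

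\medskip

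First I would treat $q^-$, problem (\ref{equ:quadbarp-}). I would assemble the decision vector as $z:=(t,b,c,x,y,s)$ and verify that $z$ ranges over the cone $\K=\homg(\U)\times\RR_+^n\times\RR^m\times\RR_+^n$: the blocks $(t,b,c)\in\homg(\U)$, $x\in\RR_+^n$, $y\in\RR^m$ (free), and $s\in\RR_+^n$ match the four factors of $\K$ in order. The objective $(\hat c+c)^Tx = \hat c^Tx + c^Tx$ is quadratic; its purely quadratic part $c^Tx$ is encoded by a symmetric matrix $W$ with a $\tfrac12$-block coupling the $c$- and $x$-coordinates, and its linear part $\hat c^Tx$ is encoded by the vector $w$. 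The affine constraints $\hat Ax=\hat b+b$ and $\hat A^Ty+s=\hat c+c$, together with the homogenizing equation $t=1$, are collected into the single linear system $Ez=f$; I would write these out concretely, moving the bilinear-free terms so that, e.g., $\hat Ax-b-\hat b\,t=0$ uses $t$ to homogenize the constant $\hat b$ (and similarly $\hat A^Ty+s-c-\hat c\,t=0$), while $t=1$ pins the scale. The upshot is that (\ref{equ:quadbarp-}) is literally an instance of (\ref{equ:genQP}).

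\medskip

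Next I would handle $q^+$, problem (\ref{equ:quadbarp+}), in the same way. Since (\ref{equ:quadbarp+}) is a supremum, I would first rewrite it as an infimum of the negated objective $-(\hat b+b)^Ty=-\hat b^Ty-b^Ty$ so that it matches the $\inf$ in (\ref{equ:genQP}); the bilinear term $b^Ty$ again determines a $W$ coupling the $b$- and $y$-coordinates, and $-\hat b^Ty$ (together with a $t$-homogenization of the constant) gives $w$. The constraints are identical to those of $q^-$, so the same $E$, $f$, and $\K$ apply, and the reduction to (\ref{equ:genQP}) goes through verbatim. Having placed both problems in the form (\ref{equ:genQP}), I would invoke Lemma \ref{lemma:Burer}, whose hypotheses require only that $\K$ be a closed convex cone---which $\K$ is, being a product of $\homg(\U)$ (closed and convex because $\U$ is compact and convex by Assumption \ref{assmp:a1}) with polyhedral cones---to conclude that each problem equals its copositive reformulation (\ref{equ:genCOP}).

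\medskip

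The main obstacle I anticipate is verifying that $\homg(\U)$ is genuinely a \emph{closed} convex cone, since Lemma \ref{lemma:Burer} requires $\K$ closed and the standard homogenization of an unbounded convex set need not be closed at the origin. Here compactness of $\U$ (Assumption \ref{assmp:a1}) is exactly what rescues closedness: because $\U$ is bounded, no sequence $(t_k,b_k,c_k)$ with $t_k\downarrow 0$ and $(b_k,c_k)/t_k\in\U$ can run off to a nonzero limit, so the only boundary point added at $t=0$ is the apex $(0,0,0)$, which is already included. I would state this closedness verification explicitly, as it is the one place where the full hypothesis on $\U$ is used and where a careless argument could fail. The remaining bookkeeping---checking that the diagonal constraint $\diag(EZE^T)=f\circ f$ and the cone $\CP(\RR_+\times\K)$ in (\ref{equ:genCOP}) are produced correctly---is routine once the form (\ref{equ:genQP}) is established, so I would not belabor it.
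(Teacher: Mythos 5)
Your proposal is correct and follows essentially the same route as the paper: identify $z=(t,b,c,x,y,s)$, encode the homogenized linear constraints as $Ez=f$ with $t=1$, match the bilinear and linear objective pieces to $W$ and $w$, and invoke Lemma \ref{lemma:Burer}. Your explicit verification that $\homg(\U)$ is a closed convex cone (via compactness of $\U$) is a worthwhile detail that the paper's proof leaves implicit, but it does not change the argument.
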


\begin{proof}
We prove the result for just problem (\ref{equ:quadbarp-}) since
the argument for problem (\ref{equ:quadbarp+}) is similar. First,
we identify $z \in \K$ in (\ref{equ:genQP}) with $(t,b,c,x,y,s)
\in \homg(\U) \times \RR_+^n \times \RR^m \times \RR_+^n$ in
(\ref{equ:quadbarp-}). In addition, in the constraints, we identify
$Ez = f$ with the equations $\hat A x = t \hat b + b$, $\hat A^T y +
s = t \hat c + c$, and $t = 1$. Note that the right-hand-side vector
$f$ is all zeros except for a single entry corresponding to the
constraint $t = 1$. Moreover, in the objective, $z^T W z$ is identified
with the bilinear term $c^T x$, and $2 \, w^T z$ is identified with
the linear term $\hat c^T x$. With this setup, it is clear that
(\ref{equ:quadbarp-}) is an instance of (\ref{equ:genQP}) and hence
Lemma \ref{lemma:Burer} applies to complete the proof.
\end{proof}

\subsection{SDP-based conic relaxations} \label{ssec:sdprelax}

As discussed above, the copositive formulations of
(\ref{equ:quadbarp-}) and (\ref{equ:quadbarp+}) as represented
by (\ref{equ:genCOP}) are convex yet generally intractable. Thus, we
propose SDP-based conic relaxations that are polynomial-time solvable
and hopefully quite tight in practice. In Section \ref{sec:expr} below,
we will investigate their tightness computationally.

We propose relaxations that are formed from (\ref{equ:genCOP}) by
relaxing the cone constraint
$$
M := \begin{pmatrix} 1 & z^T \\ z & Z \end{pmatrix} \in \CP(\RR_+ \times \K).
$$
As is well known---and direct from the definitions---cones of the form
$\CP(\cdot)$ are contained in the positive semidefinite cone. Hence, we
will enforce $M \succeq 0$. It is also true that $M \in \CP(\RR_+ \times
\K)$ implies $z \in \K$, although $M \succeq 0$ does not necessarily
imply this. So, in our relaxations, we will also enforce $z \in \K$.
Including $z \in \K$ improves the relaxation and also helps in the
calculation of bounds in Section \ref{ssec:boundsfeas}

Next, suppose that the description of $\RR_+ \times \K$ contains at
least two linear constraints, $a_1^Tz \le b_1$ and $a_2^Tz \le b_2$. By
multiplying $b_1 - a_1^T z$ and $b_2 - a_2^T z$, we obtain a valid,
yet redundant, quadratic constraint $b_1b_2 - b_1a_2^Tz - b_2a_1^Tz
+ a_1^Tzz^Ta_2 \ge 0$ for $\CP(\RR_+ \times \K)$. This quadratic
inequality can in turn be linearized in terms of $M$ as $b_1b_2 -
b_1a_2^Tz - b_2a_1^Tz + a_1^TZa_2 \ge 0$, which is valid for $\CP(\RR_+
\times \K)$. We add this linear inequality to our relaxation; it is
called an {\em RLT constraint\/} \cite{Sherali.Adams.1997}. In fact,
we add all such RLT constraints arising from all pairs of linear
constraints present in the description of $\RR_+ \times \K$.

When the description of $\RR_+ \times \K$ contains at least one
linear constraint $a_1^Tz \le b_1$ and one second-order-cone constraint
$\| d_2 - C_2^T z \| \le b_2 - a_2^T z$, where $d_2$ is a vector and
$C_2$ is a matrix, we will add a so-called {\em SOC-RLT constraint\/} to
our relaxation \cite{Burer4}. The constraint is derived by multiplying
the two constraints to obtain the valid quadratic second-order-cone
constraint
\[
    \| (b_1 - a_1^T z)(d_2 - C_2^T z) \| \le (b_1 - a_1^T z)(b_2 - a_2^T z).
\]
After linearization by $M$, we have the second-order-cone constraint
\[
    \| b_1 d_2 - d_2 a_1^T z - b_1 C_2^T z + C_2^T Z a_1 \|
    \le
    b_1b_2 - b_1a_2^Tz - b_2a_1^Tz + a_1^TZa_2.
\]

Finally, recall the redundant complementarity constraint $x \circ s =
0$ described at the end of Section \ref{ssec:finite}, which is valid for both
(\ref{equ:quadbarp-}) and (\ref{equ:quadbarp+}). Decomposing it as $x_i
s_i = 0$ for $i = 1,\ldots,n$, we may translate these $n$ constraints to
(\ref{equ:genCOP}) as $z^T H_i z = 0$ for appropriatly defined matrices
matrices $H_i$. Then they may be linearized and added to our
relaxation as $H_i \bullet Z = 0$.

To summarize, let $\RLT$ denote the set of $(z, Z)$ satisfying all the
derived RLT constraints, and similarly, define $\SOCRLT$ as the set
of $(z, Z)$ satisfying all the derived SOC-RLT constraints. Then the
SDP-based conic relaxation for (\ref{equ:genCOP}) that we propose to
solve is

\begin{equation}  \label{equ:genSDP} 
\begin{array}{ll}
 \inf   \ \ \ &W \bullet Z + 2 \, w^T z \\
   \st \ \ \ &Ez = f, \ \diag(EZE^T) = f \circ f  \\
   &H_i \bullet Z = 0  \ \ \forall \ i = 1,\ldots, n \\
   & (z, Z) \in \RLT \cap \SOCRLT \\
   &\begin{pmatrix} 1 & z^T \\ z & Z \end{pmatrix} \succeq 0, \ z \in \K.
\end{array}
\end{equation}

\noindent It is worth mentioning that, in many cases, the RLT and
SOC-RLT constraints will already imply $z \in \K$, but in such cases, we
nevertheless write the constraint in (\ref{equ:genSDP}) for emphasis;
see also Section \ref{ssec:boundsfeas} below.

When translated to the problem (\ref{equ:quadbarp-}) for
calculating $q^-$, the relaxation (\ref{equ:genSDP}) gives rise to
a lower bound $q_{\SDP}^- \le q^-$. Similarly, when applied to
(\ref{equ:quadbarp+}), we get an upper bound $q_{\SDP}^+ \ge q^+$.

\subsection{Bounds from feasible solutions} \label{ssec:boundsfeas}

In this section, we discuss two methods to approximate $q^-$
from above and $q^+$ from below, i.e., to bound $q^-$ and
$q^+$ using feasible solutions of (\ref{equ:quadbarp-}) and
(\ref{equ:quadbarp+}), respectively.

The first method, which has been inspired by \cite{Peng.Zhu.2015},
utilizes the optimal solution of the SDP relaxation (\ref{equ:genSDP}).
Let us discuss how to obtain such a bound for (\ref{equ:quadbarp-}),
as the discussion for (\ref{equ:quadbarp+}) is similar. We first
observe that any feasible solution $(z,Z)$ of (\ref{equ:genSDP})
satisfies $Ez = f$ and $z \in \K$, i.e., $z$ satisfies all of the
constraints of (\ref{equ:genQP}). Since (\ref{equ:genQP}) is equivalent
to (\ref{equ:quadbarp-}) under the translation discussed in the proof
of Theorem \ref{the:cop}, $z$ gives rise to a feasible
solution $(x,y,s,b,c)$ of (\ref{equ:quadbarp-}). From this feasible
solution, we can calculate $(\hat c + c)^T x \ge q^-$. In practice, we
will start from the optimal solution $(z^-, Z^-)$ of (\ref{equ:genSDP}).
We summarize this approach in the following remark.

\begin{remark} \label{rem:bound}
Suppose that $(z^-, Z^-)$ is an optimal solution of the relaxation
(\ref{equ:genSDP}) corresponding to (\ref{equ:quadbarp-}), and let
$(x^-,y^-,s^-,b^-,c^-)$ be the translation of $z^-$ to a feasible point
of (\ref{equ:quadbarp-}). Then, $r^- := (\hat c + c^-)^Tx^- \ge q^-$.
Similarly, we define $r^+ := (\hat b + b^+)^T y^+ \le q^+$ based on an
optimal solution $(z^+, Z^+)$ of (\ref{equ:genSDP}) corresponding to
(\ref{equ:quadbarp+}).
\end{remark}

Our second method for bounding $q^-$ and $q^+$ using feasible solutions
is a sampling procedure detailed in Algorithm \ref{Algo:A1}. The main
idea is to generate randomly a point $(b,c) \in \overline{\U}$ and then
to calculate $p(b,c)$, which serves as an upper bound of $p^-$ and a
lower bound of $p^+$, i.e., $p^- \le p(b,c) \le p^+$. Multiple points
$(b^k,c^k)$ and values $p^k := p(b^k,c^k)$ are generated and the best
bounds $p^- \le v^- := \min_k \{ p^k \}$ and $\max_k \{ p^k \} =: v^+
\le p^+$ are saved. In fact, by the bilinearity of (\ref{equ:quadbarp-})
and (\ref{equ:quadbarp+}), we we may restrict attention to the
extreme points $(b,c)$ of $\overline{\U}$ without reducing the
quality of the resultant bounds; see also the discussion in Section
\ref{ssec:complexity}. Hence, Algorithm \ref{Algo:A1} generates---with
high probability---a random extreme point of $\overline{\U}$ by
optimizing a random linear objective over $\overline{\U}$, and we
generate the random linear objective as a vector uniform on the sphere,
which is implemented by a well-known, quick procedure. Note that,
even though the random objective is generated according to a specific
distribution, we cannot predict the resulting distribution over the
extreme points of $\overline{\U}$.

\begin{algorithm}
\caption{Sampling procedure to bound $q^-$ from above and $q^+$ from below}\label{Algo:A1}
\begin{algorithmic}
\STATE \textbf{Inputs:} Instance with uncertainty set $\U$ and restricted
uncertainty set $\overline{\U}$. Number of random trials $T$.
\STATE \textbf{Outputs:} Bounds $v^- := \min_k \{ p^k \} \ge p^-$ and $v^+ := \max_k \{ p^k \}
\le p^+$.
\FOR{$k=1,\ldots,T$}
\STATE Generate $(f,g) \in \RR^m \times \RR^n$ uniformly on the unit sphere.
\STATE Calculate $(b^k,c^k) \in \Arg\min \{ f^T b + g^T c : (b,c) \in \overline{\U} \}$.
\STATE Set $p^k := p(b^k,c^k)$.
\ENDFOR
\end{algorithmic}
\end{algorithm}

As all four of the bounds $r^-, r^+, v^-$, and $v^+$ are constructed
from feasible solutions, we can further improve them heuristically by
exploiting the bilinear objective functions in (\ref{equ:quadbarp-})
and (\ref{equ:quadbarp+}). In particular, we employ the standard local
improvement heuristic for programs with a bilinear objective and
convex constraints (e.g., see \cite{Konno}). Suppose, for example,
that we have a feasible point $(x^-, y^-, s^-, b^-, c^-)$ for problem
(\ref{equ:quadbarp-}) as discussed in Remark \ref{rem:bound}.
To attempt to improve the solution, we fix the variable $c$ in
(\ref{equ:quadbarp-}) at the value $c^-$, and we solve the resulting
convex problem for a new, hopefully better point $(x^1, y^1, s^1, b^1,
c^1)$, where $c^1 = c^-$. Then, we fix $x$ to $x^1$, resolve, and
get a new point $(x^2, y^2, s^2, b^2, c^2)$, where $x^2 = x^1$. This
alternating process is repeated until there is no further improvement in the
objective of (\ref{equ:quadbarp-}), and the final objective is our bound
$r^-$.
 
In Section \ref{sec:expr} below, we use the bounds $r^-$, $r^+$,
$v^-$, and $v^+$ to verify the quality of our bounds $q_{\SDP}^-$ and
$q_{\SDP}^+$. Our tests indicate that neither bound, $r^-$ nor $v^-$,
dominates the other---and similarly for the bounds $r^+$ and $v^+$.
Hence, we will actually report the better of each pair: $\min\{r^-,
v^-\}$ and $\max\{r^+, v^+ \}$. Also, for the calculations of $v^-$ and
$v^+$, we always take $T = 10,000$ in Algorithm \ref{Algo:A1}.

\section{Computational Experiments} \label{sec:expr}

In this section, we validate our approach by testing it on six examples
from the literature as well as an example of our own making. The first
three examples in Section \ref{ssec:sa_literature} correspond to
classical sensitivity analysis approaches for LP; the fourth example
in Section \ref{ssec:inventory} corresponds to an interval LP in
inventory management; the fifth example in Section \ref{ssec:sysrisk}
corresponds to a systemic-risk calculation in financial systems; and
the last example in Section \ref{ssec:networkflow} is a transportation
network flow problem. We implement our tests in Python (version 2.7.6)
with Mosek (version 7.1.0.33) as our convex-optimization solver. All
of Mosek's settings are set at their defaults, and computations are
conducted on a Macintosh OS X Yosemite system with a quad-core 3.20GHz
Intel Core i5 CPU and 8 GB RAM.

\subsection{Examples from classical sensitivity analysis} \label{ssec:sa_literature}

Consider the following nominal problem from \cite{Wendell4}:
\[
\begin{array}{llll} 
\min  &  -12 x_1 - 18 x_2 - 18 x_3 - 40 x_4  & &\\
\st & \quad \ 4 x_1 + 9 x_2 + 7 x_3 + 10 x_4 + x_5 & =  &6000 \\
     & \quad \ \ \ x_1 + \ x_2 + 3 x_3 + 40 x_4 \ \ \ \ \ \ \ + x_6 & = & 4000 \\
     & \quad \ \ \ x_1, \ldots, x_6 \ge 0.
\end{array}
\]
The optimal basis is $B = \{1, 4\}$ with optimal solution $\tfrac13
(4000, 0, 0, 200, 0, 0)$ and optimal value $p(0,0)=-18667$. According to
standard, ``textbook'' sensitivity analysis, the optimal basis persists
when the coefficient of $x_1$ lies in the interval $[-16, -10]$ and
other parameters remain the same. Along this interval, one can easily
compute the best-case value $-24000$ and worst-case value $-16000$,
and we attempt to reproduce this analysis with our approach. So let us
choose the uncertainty set
\[
\U =
\left\{ (b, c) \in \RR^2 \times \RR^6: 
\begin{array}{c}
   b_1 = b_2 = 0  \\
   c_1 \in [-4,2] \\
   c_2 =  \dots = c_6 = 0
\end{array} \right\},
\]
which corresponds precisely to the above allowable decrease and increase
on the coefficient of $x_1$. Note that Assumptions \ref{assmp:a1}
and \ref{assmp:a2} are satisfied. We thus know from above that
$q^- = -24000$ and $q^+ = -16000$. Since $b = 0$ in $\U$, Remark
\ref{rem:polytime_b0} implies that $q^+$ is easy to calculate. So
we apply our approach, i.e., solving the SDP-based relaxation, to
approximate $q^-$. The relaxation value is $q_{\SDP}^-=-24000$, which
recovers $q^-$ exactly. The CPU time for computing $q_{\SDP}^-$ is 0.10
seconds.

Our second example is also based on the same nominal problem from
\cite{Wendell4}, but we consider the 100\%-rule. 
Again, we know that the optimal basis $B=\{1,4\}$ persists
when the coefficient of $x_1$ lies in the interval $[-16, -10]$ (and all
other parameters remain the same) or separately when the coefficient of
$x_2$ lies in the interval $[-134/3, +\infty]$ (and all other parameters
remain the same). In accordance with the 100\%-rule, we choose to
decrease the coefficient of $x_1$, and thus its allowed interval
is $[-16,-12]$ of width 4. We also choose to decrease the
coefficient of $x_2$, and thus its allowed interval is $[-134/3,
-18]$ of width $80/3$. The 100\%-rule ensures that the
optimal basis persists as long as the sum of fractions, corresponding to
the percent of maximum changes in the coefficients of $x_1$ and $x_2$,
is less than or equal to 1. In other words, suppose that $\tilde c_1$
and $\tilde c_2$ are the perturbed values of the coefficients of $x_1$
and $x_2$, respectively, and that all other coefficients stay the same.
Then the nominal optimal basis persists for $(\tilde c_1, \tilde c_2)$
in the following simplex:
\[
\left\{ (\tilde c_1, \tilde c_2) : 
\begin{array}{c}
    \tilde c_1 \in [-16, -12] \\
    \tilde c_2 \in [-134/3,-18] \\
  \tfrac{-12 - \tilde c_1}{4} + \tfrac{-18 - \tilde c_2}{80/3} \le 1
\end{array} \right\}.
\] 
By evaluating the three extreme points $(-12, -18)$, $(-16,-18)$
and $(-12, -134/3)$ of this set with respect to the nominal optimal
solution, one can calculate the best-case optimal value as $q^- =
-24000$ and the worst-case optimal value as $q^+ = -18667$. We again
apply our approach in an attempt to recover empirically the 100\%-rule.
Specifically, let
\[
\U =
\left\{ (b, c) : 
\begin{array}{c}
   b_1 = b_2 = 0 \\
   c_1 \in [-4,0], \ c_2 \in [-\frac{80}{3},0] \\
  - \tfrac{c_1}{4} - \tfrac{c_2}{80/3} \le 1 \\
   c_3 =  \dots = c_6 = 0
\end{array} \right\}.
\]
Note that Assumptions \ref{assmp:a1} and \ref{assmp:a2} are satisfied.
Due to $b=0$ and Remark \ref{rem:polytime_b0}, we focus our attention
on $q^-$. Calculating the SDP-based relaxation value, we see that
$q_{\SDP}^-=-24000$, which recovers $q^-$ precisely. The CPU time is
0.15 seconds

Our third example illustrates the tolerance approach, and we continue
to use the same nominal problem from \cite{Wendell4}. As mentioned in
the Introduction, the tolerance approach considers simultaneous and
independent perturbations in the objective coefficients by calculating a
maximum tolerance percentage such that, as long as selected coefficients
are accurate to within that percentage of their nominal values, the
nominal optimal basis persists; see \cite{Wendell3}. Let us consider
perturbations in the coefficients of $x_1$ and $x_2$ with respect to the
nominal problem. Applying the tolerance approach of \cite{Wendell3}, the
maximum tolerance percentage is $1/6$ in this case. That is, as long
as the two coefficient values vary within $-12 \pm 12/6 = [-14, -10]$
and $-18 \pm 18/6 = [-21,-15]$, respectively, then the nominal optimal
basis $B = \{1, 4\}$ persists. By testing the four extreme points of the
box of changes $[-14,-10] \times [-21,-15]$ with respect to the optimal
nominal solution, one can calculate the best-case optimal value as $q^-
= -21333$ and the worst-case optimal value as $q^+ = -16000$. To test
our approach in this setting, we set
\[
\U :=
\left\{ (b, c) : 
\begin{array}{c}
   b_1 = b_2 = 0, \ c_3 =  \dots = c_6 = 0 \\
   c_1 \in [-2,2], c_2 \in [-3,3] 
\end{array} \right\}
\]
and, as in the previous two examples, we focus on $q^-$. Assumptions
\ref{assmp:a1} and \ref{assmp:a2} are again satisfied, and we calculate
the lower bound $q_{\SDP}^-=-21333$, which recovers $q^-$ precisely. The
CPU time for computing $q_{\SDP}^-$ is 0.13 seconds.

\subsection{An example from interval linear programming} \label{ssec:inventory}

We consider an optimization problem that is typical in inventory
management, and this particular example originates from \cite{Gabrel}.
Suppose one must decide the quantity to be ordered during each period
of a finite, discrete horizon consisting of $T$ periods. The goal
is to satisfy exogenous demands $d_k$ for each period $k$, while
simultaneously minimizing the total of purchasing, holding, and shortage
costs. Introduce the following variables for each period $k$:
\begin{align*}
s_k = \ & \text{stock available at the end of period $k$;} \\
x_k = \ & \text{quantity ordered at the beginning of period $k$}.
\end{align*}
Items ordered at the beginning of period $k$ are delivered in time to
satisfy demand during the same period. Any excess demand is backlogged.
Hence, each $x_k$ is nonnegative, each $s_k$ is free, and
$$
s_{k-1} + x_k - s_k = d_k.
$$ 
The order quantities $x_k$ are further subject to uniform upper and
lower bounds, $u$ and $l$, and every stock level $s_k$ is bounded above
by $U$. At time $k$, the purchase cost is denoted as $c_k$, the holding
cost is denoted as $h_k$, and the shortage cost is denoted $g_k$. Then,
the problem can be formulated as the following linear programming
problem (assuming that the initial inventory is 0):
\begin{equation} 
\begin{array}{lll} \label{equ:ilp}
\min & \sum_{k=1}^T (c_kx_x + y_k) & \\
\st & s_0 = 0 \\
&  s_{k-1} + x_k - s_k = d_k & \ k=1,\dots,T  \\
& y_k \ge h_k s_k & \  k=1,\dots,T \\
&  y_k \ge -g_k s_k  & \ k=1,\dots,T \\
& l \le x_k \le u & \ k=1,\dots,T \\
& s_k \le U & \ k=1,\dots,T \\
& x_k, y_k \ge 0 & \ k=1,\dots,T. 
\end{array}
\end{equation}

As in \cite{Gabrel}, consider an instance of (\ref{equ:ilp}) in which
$T = 4$, $u=1500, l=1000,$ $U = 600$, and all costs are as in Table
\ref{Tab:inv_costs}. Moreover, suppose the demands $d_k$ are each
uncertain and may be estimated by the intervals $d_1 \in [700, 900], d_2
\in [1300, 1600], d_3 \in [900, 1100],$ and $d_4 \in [500, 700]$. From
\cite{Gabrel}, the worst-case optimal value over this uncertainty set is
$q^+ = 25600$. For our approach, it is easy to verify that Assumptions
\ref{assmp:a1} and \ref{assmp:a2} are satisfied, and solving
our SDP-based conic relaxation with an uncertainty set corresponding to the
intervals on $d_k$, we recover $q^+$ exactly, i.e., we have $q_{\SDP}^+
= 25600$. The CPU time for computing our SDP optimal value is 1,542
seconds.

\begin{table}[tbp]
\centering
\begin{tabular}{ccccc}
Period ($k$) & Purchasing cost ($c_k$) & Holding cost ($h_k$) & Shortage cost ($g_k$) \\ \hline
1 & 7   & 2  & 3 \\
2 & 1   & 1  & 4 \\
3 & 10 & 1 &  3 \\ 
4 & 6   & 1 &  3
\end{tabular}
\caption{Costs for each period of an instance of the inventory
management problem}
\label{Tab:inv_costs}
\end{table}

Since the uncertainties only involve the right-hand sides, Remark
\ref{rem:polytime_c0} implies that the best-case value $q^-$ can
be calculated in polynomial-time by solving an LP that directly
incorporates the uncertainty.

\subsection{Worst-case linear optimization} \label{ssec:sysrisk}

We next consider an example for calculating systemic risk in financial
systems, which is an application of worst-case linear optimization
(WCLO) presented in \cite{Peng.Zhu.2015}.

For an interbank market, systemic risk is used to evaluate the potential
loss of the whole market as a response to the decisions made by the
individual banks \cite{Eisenberg.Noe.2001, Peng.Zhu.2015}. Specifically,
let us consider a market consisting of $n$ banks. We use an $n \times
n$ matrix $\hat L$ to denote the liability relationship between any
two banks in the market. For instance, the element $\hat L_{ij}$
represents the liability of bank $i$ to bank $j$. In the market, banks
can also receive exogenous operating cash flows to compensate their
potential shortfalls on incoming cash flows. We use $\hat b_i$ to
denote the exogenous operating cash flow received by bank $i$. Given
the vector $\hat b$, we calculate the systemic loss $l(\hat b)$ of
the market, which measures the amount of overall failed liabilities
\cite{Peng.Zhu.2015}:
\[
\begin{array}{lll} 
l(\hat b) = & \displaystyle{\min_x} & \sum_{i=1}^n (1 - x_i)  \\
&\st & (\sum_{j=1}^n\hat L_{ij})x_i - \sum_{j=1}^n\hat L_{ji}x_j \le \hat b_i \ \ \ \forall \; i = 1, \ldots, n  \\
& & x_i \le 1 \hspace*{1.92in} \forall \; i = 1, \ldots, n.
\end{array}
\]
Here the decision variables $x_i$ represent the ratio of the total
dollar payment by bank $i$ to the total obligation of bank $i$. These
ratios are naturally less than or equal to 1 ($x_i \le 1$) as the banks
do not pay more than their obligations. In contrast, $1 - x_i$ denotes
the ratio of bank $i$ failing to fulfill its obligations. Furthermore,
we have a less-than-or-equal-to sign in the first constraint as the
system allows {\em limited liability } (see \cite{Eisenberg.Noe.2001}).
Finally, the objective is to minimize the total failure ratio of the
system.

In practice, however, there exist uncertainties in the exogenous
operating cash flows. Allowing for uncertainties, the worst-case
systemic risk problem \cite{Peng.Zhu.2015} is given as
\[
\begin{array}{lll} 
\displaystyle{\max_{b \in \V}} & \displaystyle{\min_x} & \sum_{i=1}^n (1 - x_i)  \\
&\st & (\sum_{j=1}^n\hat L_{ij})x_i - \sum_{j=1}^n\hat L_{ji}x_j \le  \hat b_i + Q_{i\cdot} b \ \ \ \forall \; i = 1, \ldots, n  \\
& & x_i \le 1 \hspace*{2.35in} \forall \; i = 1, \ldots, n.
\end{array}
\]
where $\V := \{b \in \RR^m: \|b\| \le 1\}$ denotes the uncertainty set,
$Q \in \RR^{n\times m}$ for some $m \le n$ corresponds to an affine
scaling of $\V$, and $Q_{i\cdot}$ denotes the $i$-th row of $Q$.
After converting the nominal LP to our standard form, we can easily
put the systemic risk problem into our framework by defining
$\U := \{ (b, c) : b \in \V, \ c = 0 \}$ and slightly changing our
$\overline \U$ to reflect the dependence on the affine transformation as
represented by the matrix $Q$.

\begin{table}[tbp]
\centering
\begin{tabular}{c|ccc|c}  % repeats {c|} 8 times
 Instance & $\max\{r^+,v^+\}$ & gap$^+$ & $q_{\SDP}^+$ & $t_{\SDP}^+ (s)$  \\ \hline
 1 &  0.521 & 0.0\%  & 0.521 & 0.66  \\ 
 2 &  1.429 & 0.0\%  & 1.429 & 0.82\\ 
 3 &  1.577 & 0.0\%  & 1.577 & 0.83\\ 
 4 &  0.344 & 0.0\%  & 0.344 & 0.68\\ 
 5 &  0.724 & 0.0\%  & 0.724 & 0.73\\ 
 6 &  0.690 & 0.0\%  & 0.690 & 0.80\\ 
 7 &  1.100 & 0.0\%  & 1.100 & 0.73\\ 
 8 &  0.458 & 0.0\%  & 0.458 & 0.78\\ 
 9 &  0.427 & 0.0\%  & 0.427 & 0.74\\ 
 10 &  0.300 & 0.0\%  & 0.300 & 0.78
\end{tabular}
\caption{Results for the systemic-risk example}
\label{Tab:sys_risk}
\end{table}

Similar to table 4 in \cite{Peng.Zhu.2015}, we randomly generate
10 instances of size $m \times n = 3 \times 5$. In accordance with
Remark \ref{rem:polytime_c0}, which states that $q^-$ is easy to
calculate in this case, we focus our attention on the worst-case value
$q^+$. It is straightforward to verify Assumptions \ref{assmp:a1} and
\ref{assmp:a2}. In Table \ref{Tab:sys_risk}, we list our 10 upper
bounds\footnote{Mosek encountered numerical problems
on some---but not all---of the generated system-risk instances. In
Table \ref{Tab:sys_risk}, we show 10 instances on which Mosek had
no numerical issues. From private communication with the Mosek
developers, it appears that the upcoming version of Mosek (version 8)
will have fewer numerical issues on these instances.} (one
for each of the 10 instances) in the column titled $q_{\SDP}^+$, and we
report the computation time (in seconds) for all 10 instances under the
column marked $t_{\SDP}^+$. To evaluate the quality of $q_{\SDP}^+$, we
also calculate $\max\{ r^+, v^+ \}$ for each instance and the associated
relative gap:
\[
\text{gap}^+ = \frac{q_{\SDP}^+ - \max\{ r^+, v^+\}}{\max\{ |\max\{ r^+, v^+\}|, \ 1 \}} \times 100\%.
\]
The computation times for computing $r^-$ and $r^+$ are trivial, while
the average computation time for computing $v^-$ and $v^+$ is about
$77$ seconds.

From the results in Table \ref{Tab:sys_risk}, we see clearly that our approach
recovers $q^+$ for all 10 instances, which also matches the quality of results
from \cite{Peng.Zhu.2015}.

\subsection{A network flow problem} \label{ssec:networkflow}

Next we consider a transportation network flow problem from
\cite{Xie}, which has $m_1=5$ suppliers/origins and $m_2=10$
customers/destinations for a total of $m=15$ facilities. The 
network is bipartite and consists of $n=24$ arcs connecting suppliers and customers; see
Figure \ref{Fig:network}. Also shown in Figure \ref{Fig:network} are the
(estimated) supply and demand numbers ($\hat b$) for each supplier and customer.
In addition, the (estimated) unit transportation costs ($\hat c$) associated with
the arcs of the network are given in Table \ref{Tab:cost}. Suppose at
the early stages of planning, the supply and demand units and the unit
transportation costs are uncertain. Thus, the manager would like to
quantify the resulting uncertainty in the optimal transportation cost.

\begin{figure}[ht]
  \centering
  \includegraphics[width=2.5in]{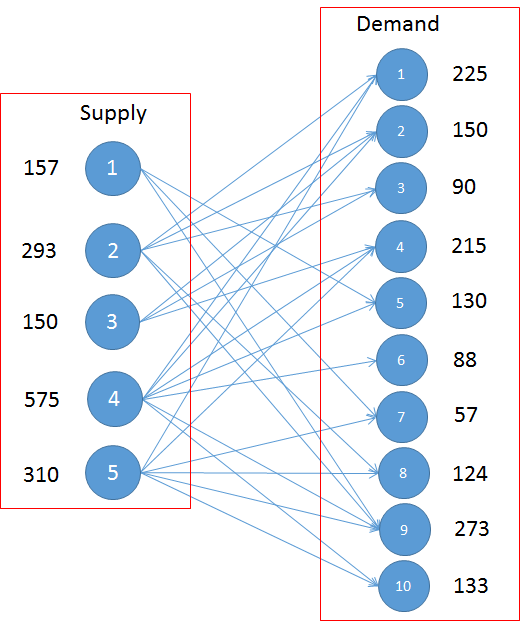}
  \caption{The transportation network of the 5 suppliers and 10 customers.}
  \label{Fig:network}
\end{figure}

\begin{table}[tbp]
\centering
\begin{tabular}{c|cccccccccc}  % repeats {c|} 4 times
& \multicolumn{10}{c}{Customer} \\
Supplier & 1 & 2 & 3 & 4 & 5 & 6 & 7 & 8 & 9 & 10  \\ \hline
1 & & & & & 2 & & 3 & & 2 & \\ 
2 & 3 & 3 & 4 & & & & & 1 & 4 & \\
3 & & 4 & 5 & 3 & & & & & & \\ 
4 & 1 & 2 & & 1 & 3 & 1 & & & 8 &  2 \\
5 & 4 & & & 3 & & & 2 & 1 & 2 & 1
\end{tabular}
\caption{The unit transportation costs associated with the arcs of the network.}
\label{Tab:cost}
\end{table}

%{\color{red} Are the inequalities in red correct, or are they left over
%from some previous discussions?}
We consider three cases for the uncertainty set, each of which is
 also parameterized by a scalar $\gamma \in (0,1)$. In the first case
 (``POLY''), we consider the polyhedral uncertainty set
 \[
 \U_1(\gamma) =
 \{ (b, c) : 
 \begin{array}{c}
     \|b\|_1 \le \gamma \|\hat b\|_1,  \  \|c\|_1 \le \gamma \|\hat c\|_1
 \end{array} \};
 \]
in the second case (``SOC''), we consider the second-order-cone
 uncertainty set
 \[
 \U_2(\gamma) :=
 \{ (b, c) : 
 \begin{array}{c}
   \|b\| \le \gamma \|\hat b\|, \ \|c\| \le \gamma \|\hat c\|
 \end{array} \};
 \]
 and in the third case (``MIX''), we consider a mixture of the first two
 cases:
 \[
 \U_3(\gamma) :=
 \{ (b, c) : 
 \begin{array}{c}
 \|b\|_1 \le \gamma \|\hat b\|_1,  \  \|c\| \le \gamma \|\hat c\|
 \end{array} \}.
 \]

\noindent For each, $ \gamma$ controls the perturbation magnitude in $b$
and $c$ relative to $\hat b$ and $\hat c$, respectively. In particular,
we will consider three choices of $\gamma$: 0.01, 0.03, and 0.05. For
example, $\gamma = 0.03$ roughly means that $b$ can vary up to 3\% of
the magnitude of $\hat b$. In total, we have three cases with three
choices for $\gamma$ resulting in nine overall experiments. 

Assumptions \ref{assmp:a1} and \ref{assmp:a2} are satisfied in this
example, and so we apply our approach to bound $q^-$ and $q^+$; see
Table \ref{Tab:network_reslt}. Our 18 bounds (lower and upper bounds
for each of the nine experiments) are listed in the two columns
titled $q_{\SDP}^-$ and $q_{\SDP}^+$, respectively. We also report
the computation times (in seconds) for all 18 instances under the two
columns marked $t_{\SDP}^-$ and $t_{\SDP}^+$. We also compute $r^-$,
$v^-$, $r^+$, and $v^+$ and define the relative gaps
\begin{align*}
\text{gap}^- &=
\frac{\min\{ r^-, v^-\} - q_{\SDP}^-}{\max\{ |\min\{ r^-, v^-\}|, \ 1 \}} \times 100\%, \\
\text{gap}^+ &= \frac{q_{\SDP}^+ - \max\{ r^+, v^+\}}{\max\{ |\max\{ r^+, v^+\}|, \ 1 \}} \times 100\%.
\end{align*}
Again, the computation times for $r^-$ and $r^+$ are trivial. The
average computation time for computing $v^-$ and $v^+$ is about $216$
seconds. 

Table \ref{Tab:network_reslt} shows that our relaxations capture $q^-$
and $q^-$ in all cases. As ours is the first approach to study general
perturbations in the literature, we are aware of no existing methods for
this problem with which to compare our results.

\begin{table}[tbp]
\centering
\begin{tabular}{cc|ccc|ccc|cc}  % repeats {c|} 8 times
Case                   & $\gamma$ & $q_{\SDP}^-$ & gap$^-$ & $ \min\{r^-, v^-\}$ & $ \max\{r^+, v^+\}$ & gap$^+$ & $q_{\SDP}^+$ & $t_{\SDP}^- (s)$ & $t_{\SDP}^+ (s)$ \\ \hline \hline
\multirow{3}{*}{POLY}  & 0.01     & 2638.4       & 0.0\%   & 2638.4              & 3088.8              & 0.0\%   & 3088.8       & 3551             & 4259 \\
                       & 0.03     & 2139.6       & 0.0\%   & 2139.6              & 3437.4              & 0.0\%   & 3437.4       & 5106             & 4433 \\
                       & 0.05     & 1640.9       & 0.0\%   & 1640.9              & 3769.8              & 0.0\%   & 3769.8       & 5330             & 4244 \\ \hline
\multirow{3}{*}{SOC}   & 0.01     & 2745.6       & 0.0\%   & 2745.6              & 2981.6              & 0.0\%   & 2981.6       & 190              & 122 \\
                       & 0.03     & 2498.9       & 0.2\%   & 2504.3              & 3212.1              & 0.0\%   & 3212.1       & 174              & 112 \\
                       & 0.05     & 2257.6       & 0.2\%   & 2263.0              & 3442.7              & 0.0\%   & 3442.7       & 150              & 124 \\ \hline
\multirow{3}{*}{MIX}   & 0.01     & 2724.1       & 0.0\%   & 2724.1              & 3008.4              & 0.0\%   & 3008.4       & 997             & 838 \\
                       & 0.03     & 2429.2       & 0.0\%   & 2429.2              & 3281.9              & 0.0\%   & 3281.9       & 908              & 731 \\
                       & 0.05     & 2134.3       & 0.0\%   & 2134.3              & 3560.7              & 0.0\%   & 3560.7       & 1001              & 851
\end{tabular}
\caption{Results for the transportation network problem}
\label{Tab:network_reslt}
\end{table}

\subsection{The effectiveness of the redundant constraint $x \circ s = 0$}

Finally, we investigate the effectiveness of the redundant
complementarity constraint in (\ref{equ:quadbarp-}) and
(\ref{equ:quadbarp+}) by also solving relaxations without the
the linearized version of the constraint. As it turns out, in all
calculations of $q^-_{\SDP}$, dropping the linearized complementarity
constraint does not change the relaxation value. However, in all
calculations of $q^+_{\SDP}$, dropping it has a significant effect as
shown in Table \ref{Tab:redundant_constraint}. 
In the table, the gap is defined as
\[
\text{gap} = \frac{(\text{value without constraint)} - q_{\SDP}^+}{\max\{|q_{\SDP}^+|, \ 1\}} \times 100\%.
\]  

\begin{table}[tbp]
\centering
\begin{tabular}{lccc}  % repeats {c|} 4 times
Example & $q_{\SDP}^+$ & gap & value without constraint   \\ \hline
Sec.~\ref{ssec:sa_literature} (\#1)     & -16000 & 100\%  & 0      \\
Sec.~\ref{ssec:sa_literature} (\#2)     & -18667 & 100\%  & 0      \\
Sec.~\ref{ssec:sa_literature} (\#3)     & -16000 & 100\%  & 0      \\
Sec.~\ref{ssec:inventory}               & 25600  & 1671\% & 453298 \\
Sec.~\ref{ssec:networkflow} (POLY 0.01) & 3088.8 & 5.7\%  & 3265.8 \\
Sec.~\ref{ssec:networkflow} (POLY 0.03) & 3437.4 & 2.7\%  & 3528.5 \\
Sec.~\ref{ssec:networkflow} (POLY 0.05) & 3769.8 & 2.3\%  & 3855.6 \\
Sec.~\ref{ssec:networkflow} (SOC 0.01)  & 2981.6 & 87.6\% & 5593.1 \\
Sec.~\ref{ssec:networkflow} (SOC 0.03)  & 3212.1 & 84.3\% & 5920.2 \\
Sec.~\ref{ssec:networkflow} (SOC 0.05)  & 3442.7 & 81.6\% & 6252.7 \\
Sec.~\ref{ssec:networkflow} (MIX 0.01)  & 3008.4 & 10.3\% & 3319.4 \\
Sec.~\ref{ssec:networkflow} (MIX 0.03)  & 3281.9 & 5.2\%  & 3453.5 \\
Sec.~\ref{ssec:networkflow} (MIX 0.05)  & 3560.7 & 3.6\%  & 3689.4 \\
\end{tabular}
\caption{Effectiveness of the linearized complementarity constraint}
\label{Tab:redundant_constraint}
\end{table}

\section{Conclusion} \label{sec:conclusion}

In this paper, we have introduced the idea of robust sensitivity
analysis for the optimal value of LP. In particular, we have discussed
the best- and worst-case optimal values under general perturbations in
the objective coefficients and right-hand sides. We have also presented
finite variants that avoid cases of infeasibility and unboundedness.
As the involved problems are nonconvex and very difficult to solve in
general, we have proposed copositive reformulations, which provide a
theoretical basis for constructing tractable SDP-based relaxations that
take into account the nature of the uncertainty set, e.g., through RLT
and SOC-RLT constraints. Numerical experiments have indicated that
our approach works very well on examples from, and inspired by, the
literature. In future research, it would be interesting to improve the
solution speed of the largest relaxations and to explore the possibility
of also handling perturbations in the constraint matrix.

\section*{Acknowledgments} \label{sec:ack}

The first author acknowledges the financial support of the AFRL Mathematical
Modeling and Optimization Institute, where he visited in Summer 2015. The second
author acknowledges the financial support of Karthik Natarajan (Singapore University
of Technology and Design) and Chung Piaw Teo (National University of Singapore),
whom he visited in Fall 2014. This research has benefitted from their many insightful
comments.

\end{onehalfspace}

\bibliographystyle{abbrv}

%\bibliography{rsa}

\appendix

\end{document}